\providecommand{\bysame}			    
{\makebox[3em]{\hrulefill}\thinspace}	
\newtheorem{definition}{Definiton}[section]
\newtheorem{remark}{Remark}[section]
\newtheorem{lemma}{Lemma}[section]
\newtheorem{theorem}{Theorem}[section]
\newtheorem{corolary}{Corollary}[section]
\title{Random Sturm-Liouville Operators with Point Interactions}
\date{}
\author{Rafael del Rio\footnote{delriomagia@gmail.com} \, and Asaf L. Franco \footnote{asaflevif@gmail.com}\\
IIMAS - UNAM$^{*\dagger}$\\
Circuito escolar, Ciudad universitaria 04510 CDMX, México
}
\begin{document}

\maketitle

\begin{abstract}
     We study invariance for eigenvalues of selfadjoint Sturm-Liouville operators with local point interactions. Such linear transformations are formally defined by 
$$H_{\omega}:=-\frac{d^2}{dx^2}+V(x)+\sum_{n\in I}\omega(n)\delta(x-x_n)$$
or similar expressions with $\delta'$ instead of $\delta$. In a probabilistic setting, we show that a point is either an eigenvalue for all $\omega$ or only for a set of $\omega$'s of measure zero. Using classical oscillation theory it is possible to decide whether the second  situation happens. The operators do not need to be measurable or ergodic. This generalizes the well known fact that for ergodic operators a point is eigenvalue with probability zero.
\end{abstract}

Mathematics Subject Classification (MSC2010): 34L05, 47E05, 47N99.

\section{Introduction}
This work is about point spectra of selfadjoint  Sturm-Liouville operators with $\delta,\delta'$-interactions. These are  defined by expressions of the form 
$$H_{\omega}:=-\frac{d^2}{dx^2}+V(x)+\sum_{n\in I}\omega(n)\delta(x-x_n)$$
or  with $\delta'$ instead of $\delta$. There are several way to introduce this objects. They can be constructed by using form methods, see \cite{KM} or by adding boundary conditions as in \cite{EVZ}, for example. Here  we shall use the approach developed in \cite{BSW} which generalizes Sturm-Liouville classical theory to include local point interactions. This has the advantage that selfadjointness, the Weyl alternative  and related  results can be established along the lines of a well known theory.
For a detailed study of this field, including many solvable models in quantum mechanics as well as an extensive list of references see the monograph \cite{AGHK}.\\

 The relations between the operators and their spectra, have deep consequences in several areas of Functional analysis, Scattering theory, Localization problems,  Dynamic behavior of Quantum systems, Differential and Integral equations, Matrix theory and so on. We shall   focus on the point spectrum and consider  operators generated by $\delta$ or $\delta'$ interactions with one common eigenvalue. This can be regarded as an inverse spectral problem, where given a point $E\in\mathbb{R}$ one tries to characterize  the sequences $\omega$ for which  $E$ belongs to the point spectra of the  operators $H_\omega$. The way we proceed is by analyzing first the operator with just one point interaction, then extend the results obtained in this case to a countable number of interactions. Finally placing our operators in a random environment, we are able to give the characterization of operators sharing the same eigenvalue in a probabilistic setting. \\
 
 In the random situation we consider here, the $\omega$ associated to $H_\omega$ is a stochastic process and each $\omega(n)$  a random variable with continuous (maybe singular) probability distribution. Our operators $H_\omega$ do not have to be measurable (see Definition \ref{meafam} ) and $\omega$ does not have to be a stationary metrically transitive random field or ergodic, see Section 9.1 \cite{CFKS}. For metrically transitive random operators it is well known that the probability for a given $E \in\mathbb{R}$ to be an eigenvalue is zero (see Corollary 1 Section 4.3 \cite{WK}, Theorem 2.12  \cite{PF}).  If we do not have this condition, in principle any situation could be possible. We show that for  random operators with point interactions , the following alternative holds: a point is either an eigenvalue for all $\omega$ or only for a set of $\omega$'s of measure zero. To decide which of these situations happens we were able to use classical oscillation theory, exploiting the relation between the zeros of eigenfunctions and the placement of the points interactions.  \\

This work is organized as follows. In Section \ref{1delta} we consider the operator with only one interaction in the regular case and  study the behavior of its point spectrum. A key tool is the relation between the Green's function associated to different boundary conditions. Starting from classic solutions we construct more general ones for the problem at hand. In Section \ref{ndelta} the results obtained for one interaction are extended to the case of countably many  and  the operators generated by the corresponding formal differential expressions are introduced. In Section \ref{mdelta} we apply the results of Sections \ref{1delta} and \ref{ndelta} to Random Operators . Theorem \ref{isornot} gives  then a characterization of the $\omega$'s such that $H_\omega$ share an eigenvalue.  Subsection \ref{sub1} considers zeros of eigenfunctions belonging to the operator without point interactions. It is proven in particular, that  nonoscillatory behavior implies the family $H_\omega$'s has a common eigenvalue for a set of $\omega$'s of measure zero.  Analogous results hold if the interactions are placed close enough. In subsection \ref{sub2}  measurable operators are introduced. Finally, in Section \ref{delta2} we study operators with $\delta'-$interactions  and show that similar results to the ones with $\delta$ holds.\\

We denote as usual by $\mathbb{R}$ the real numbers, by $L^1(J):=\{f:J\rightarrow\mathbb{R}: \int_J |f|<\infty\}$ the integrable functions , $L_{loc}^1(J):=\{ f\in L^1(\tilde J): \tilde J\subseteq J, \, \tilde J \mbox{ a closed interval}     \} $ the local integrable functions and the eigenvalues of an operator $L$ by $\sigma_p(L)$.


	\section{Sturm-Liouville Operators With One $\delta$-Point Interaction }\label{1delta}
	First we analyze the case of only one interaction in the regular case. We develop the basic steps that would be used in upcoming sections.
	Let $J\subset \mathbb R$ a closed finite interval. Let $V\in L^1(J)$ real valued function, $p\in J$ an interior point and $\alpha\in\mathbb{R}$. We consider the  formal differential expressions 
	$$\tau:=-\frac{d^2}{dx^2}+V$$
	$$\tau_{\alpha,p}:=-\frac{d^2}{dx^2}+V+\alpha\delta(x-p)$$
	The maximal operator $T_{\alpha,p}$  corresponding to $\tau_{\alpha,p}$ is defined by 
	$$T_{\alpha,p} f=\tau f$$
	$$D(T_{\alpha,p})=\{f\in L^2(J):\,f,\,f'\mbox{ abs. cont in }J\backslash \{p\},-f''+Vf\in L^2(J),$$
	$$f(p+)=f(p-),\,f'(p+)-f'(p-)=\alpha f(p)\}$$
	
	We extend the concept of solution and Wronskian in the following way
	
	\begin{definition}\label{sol}
Given $g\in L^1(J)$ and $z\in\mathbb{C}$, we call $f$ a solution of $(\tau_{\alpha,p}-z)f=g$ if $f$ and $f'$ are absolutely continuous in $J\backslash\{p\}$ with $-f''+Vf-zf=g$ and $f(p+)=f(p-)$, $f'(p+)-f'(p-)=\alpha f(p)$.
\end{definition}

\begin{definition}\label{wron}
Let $u_1$ and $u_2$ solutions of  $(\tau_{\alpha,p}-z)u=0$, $z\in\mathbb{C}$. The Wronskian $W(u_1,u_2)$ is defined by
		$$W_x(u_1,u_2)=W(u_1,u_2)(x)=det\left(\begin{array}{cc}u_1(x+)&u_2(x+)\\
		                                                                                            u'_1(x+)&u'_2(x+)\end{array}\right)=u_1(x+)u'_2(x+)-u'_1(x+)u_2(x+)$$
	\end{definition}
	
	\begin{lemma}\label{wcont}
		$W(u_1,u_2)$ is continuous at $p$.
	\end{lemma}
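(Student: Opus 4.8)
The plan is to reduce continuity of $W$ to the matching of its one-sided limits at $p$. Away from $p$, both $u_1,u_2$ and their derivatives are absolutely continuous, so $W$ is continuous there (and, differentiating and using $(\tau_{\alpha,p}-z)u_i=0$, in fact constant on each of the two subintervals). Moreover, since the Wronskian in Definition \ref{wron} is built from the right limits $u_i(x+)$, $u_i'(x+)$, it is automatically right-continuous at $p$; hence it suffices to show that the left limit equals the value $W(p)=W(p+)$.

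First I would write out $W(p+)$ and substitute the interface conditions from Definition \ref{sol}, namely $u_i(p+)=u_i(p-)=:u_i(p)$ and $u_i'(p+)=u_i'(p-)+\alpha u_i(p)$ for $i=1,2$. This gives
\begin{align*}
W(p+) &= u_1(p+)u_2'(p+)-u_1'(p+)u_2(p+)\\
&= u_1(p)\bigl(u_2'(p-)+\alpha u_2(p)\bigr)-\bigl(u_1'(p-)+\alpha u_1(p)\bigr)u_2(p).
\end{align*}
Expanding, the two terms carrying the factor $\alpha$ are $\alpha\,u_1(p)u_2(p)$ and $-\alpha\,u_1(p)u_2(p)$, which cancel, leaving
$$W(p+)=u_1(p-)u_2'(p-)-u_1'(p-)u_2(p-)=W(p-).$$
Since for $x<p$ one has $W(x)=u_1(x)u_2'(x)-u_1'(x)u_2(x)\to W(p-)$ as $x\to p^-$, this identifies the left limit with $W(p)$ and yields continuity at $p$.

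The only delicate point is the cancellation of the $\alpha$-terms, which is precisely the symmetry encoded in the jump condition $f'(p+)-f'(p-)=\alpha f(p)$: the jump in the derivative of each solution is proportional to its (continuous) value at $p$, so it contributes symmetrically to both products in the determinant. This is the structural feature of the $\delta$-interaction that makes the Wronskian, and hence the notion of linearly (in)dependent solutions, well behaved across the singularity; for the $\delta'$-interaction treated in Section \ref{delta2} an analogous but differently arranged cancellation is to be expected.
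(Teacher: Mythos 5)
Your proof is correct and uses essentially the same argument as the paper: substitute the interface conditions $u_i(p+)=u_i(p-)$ and $u_i'(p+)=u_i'(p-)+\alpha u_i(p)$ into the Wronskian at $p$ and observe that the two $\alpha\,u_1(p)u_2(p)$ terms cancel, so $W(p+)=W(p-)$. The extra remarks on right-continuity by definition and constancy of $W$ on each subinterval are fine but not needed beyond what the paper does.
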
	
\[\begin{array}{lcl}
W_{p-}(u_1,u_2)-W_{p+}(u_1,u_2)&=&u_1(p)u_2'(p-)-u_1'(p-)u_2(p)-u_1(p)u_2'(p+)+u_1'(p+)u_2(p)\\
                                                   &=&u_1(p)[u_2'(p-)-u_2'(p+)]+u_2(p)[u_1'(p+)-u_1'(p-)]\\
                                                   &=&u_1(p)[-\alpha u_2(p)]+u_2(p)[\alpha u_1(p)]\\
                                                   &=&0
\end{array}\]

\qed

 Let us fix $J=[a,b]$

	\begin{lemma}\label{wdop}
	Let $u$ and $v$ solutions of $$\tau_{\alpha,p} u=\lambda_0 u \qquad\qquad \mbox{y}\qquad\qquad \tau_{\alpha,p} v=\lambda v$$ respectively. Let $c,d\in[a,b]\backslash\{p\} $. Then
	$$W_d(u,v)-W_c(u,v)=(\lambda_0-\lambda)\int_c^du(t)v(t)dt.$$ 	
	\end{lemma}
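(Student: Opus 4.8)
The plan is to establish this as the point-interaction analogue of the classical Lagrange/Green identity, reducing it to the ordinary Wronskian computation on each side of $p$ and invoking the continuity from Lemma \ref{wcont} to handle the crossing of the singular point.

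First I would work away from the singularity. On any closed subinterval of $[a,b]$ not containing $p$, the functions $u,v$ together with $u',v'$ are absolutely continuous and satisfy the equations $-u''+Vu=\lambda_0 u$ and $-v''+Vv=\lambda v$ in the a.e.\ sense. Differentiating $W_x(u,v)=uv'-u'v$ there gives, for almost every $x$,
$$\frac{d}{dx}W_x(u,v)=uv''-u''v=u(Vv-\lambda v)-(Vu-\lambda_0 u)v=(\lambda_0-\lambda)\,u(x)v(x),$$
the first-derivative terms cancelling. Since $W_x(u,v)$ is a difference of products of absolutely continuous functions and hence absolutely continuous on such subintervals, the fundamental theorem of calculus yields the claimed identity whenever the interval of integration avoids $p$.

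Then I would treat the case where $p$ lies strictly between $c$ and $d$; assume without loss of generality $c<p<d$. I would decompose
$$W_d(u,v)-W_c(u,v)=\big(W_d-W_{p+}\big)+\big(W_{p+}-W_{p-}\big)+\big(W_{p-}-W_c\big).$$
The step already justified applies separately on $[c,p]$, approaching $p$ from the left, and on $[p,d]$, approaching from the right, giving $W_{p-}-W_c=(\lambda_0-\lambda)\int_c^p uv$ and $W_d-W_{p+}=(\lambda_0-\lambda)\int_p^d uv$. By Lemma \ref{wcont} the middle term $W_{p+}-W_{p-}$ vanishes, and adding the two integrals reconstitutes $\int_c^d uv$, completing the proof.

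The only genuine obstacle is the crossing of the singular point: a priori $W_x$ is neither differentiable nor even continuous at $p$, since $u'$ and $v'$ each jump there, so one cannot integrate the derivative naively across $p$. This is precisely what Lemma \ref{wcont} resolves—the jumps in $u'$ and $v'$ are both governed by the same coefficient $\alpha$ and cancel in the Wronskian combination—so the contribution at $p$ drops out and the identity glues seamlessly across the two sides.
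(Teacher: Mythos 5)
Your proposal is correct and follows essentially the same route as the paper: differentiate the Wronskian on $[a,b]\backslash\{p\}$ to get $(\lambda_0-\lambda)uv$, apply the fundamental theorem of calculus on each side of $p$, and invoke Lemma \ref{wcont} to cancel the jump term when $c<p<d$ (the paper splits the integral at $p$ rather than telescoping the Wronskian differences, but the two decompositions are algebraically identical). Your added care about absolute continuity and the a.e.\ sense of the equations, warranted since $V$ is only in $L^1$, is a slight refinement of the paper's more informal computation, not a different argument.
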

	\textit{Proof. }
	\[
	\begin{array}{rcl}
	\frac{d}{dx}W_x(u,v)&=&\frac{d}{dx}[u(x)v'(x)-u'(x)v(x)]\\\\
	&=&u'(x)v'(x)+u(x)v''(x)-u''(x)v(x)-u'(x)v'(x)\\\\
	&=&u(x)[V(x)v(x)-\lambda v(x)]-[V(x)u(x)-\lambda_0 u(x)]v(x)\\\\
	&=&(\lambda_0-\lambda)u(x)v(x)
	\end{array}
	\]
	Then
	$$\frac{d}{dx}W_x(u,v)=(\lambda_0-\lambda)u(x)v(x)\qquad\qquad \forall x\in [a,b]\backslash \{p\}$$
	Let $c,d\in[a,b]\backslash\{p\} $. If $a\leq c<d<p$ or $p<c<d\leq b$, then from the fundamental theorem of calculus
	$$W_d(u,v)-W_c(u,v)=(\lambda_0-\lambda)\int_c^d u(t)v(t)dt$$
	If $a\leq c<p<d\leq b$, by continuity of the Wronskian at $p$, Lemma \ref{wcont}, we have
	$$(\lambda_0-\lambda)\int_c^d u(t)v(t)dt=(\lambda_0-\lambda)\left[\int_c^p u(t)v(t)dt+\int_p^d u(t)v(t)dt\right]$$
	$$=W_{p-}(u,v)-W_c(u,v)+W_d(u,v)-W_{p+}(u,v)=W_d(u,v)-W_c(u,v)$$
	\qed\\

\begin{definition}\label{solution}	 
	A solution of $\tau_{\alpha,p} u=\lambda u$ which satisfies at a  point $l\in [a,b]\backslash \{p\} $ the boundary condition 
	$$u(l)cos\theta+u'(l)sen\theta=0,\qquad \theta\in[0,\pi)$$
	will be denoted by $u_{l,\alpha}(\lambda)$.
\end {definition}

Such solution can be constructed as follows. Assume $l\in[a,p)$, choose $w_1(x,\lambda)$ solution of $(\tau-\lambda)u=0$, such that
$$w_1(l,\lambda)=\sin\theta$$ $$w'_1(l,\lambda)=-\cos\theta$$
Once we have $w_1(x,\lambda)$, choose $w_2(x,\lambda)$ solution of $(\tau-\lambda)u=0$, such that
$$w_2(p,\lambda)=w_1(p,\lambda)$$
$$w'_2(p,\lambda)=w'_1(p,\lambda)+\alpha w_1(p,\lambda)$$
Then we can set 

$$u_{l,\alpha} (x,\lambda)=\left\{\begin{array}{c}
w_1(x,\lambda)\qquad\mbox{ if }x\leq p\\
w_2(x,\lambda)\qquad\mbox{ if }x> p
\end{array}\right.
$$
In case $l\in (p,b]$, the construction is analogous. If $\alpha=0$ then we have a solution for the classic case.\\

The functions $u_{l,\alpha}(x,\lambda)$ and $u'_{l,\alpha}(x,\lambda)$ are entire with respect to $\lambda$ for each fixed $x\in[a,b]$. See \cite{GT}, Theorem 9.1 and \cite{AZ}, Theorem 2.5.3.\\

The following is a generalization of Theorem 8.4.2 in \cite{ATK},\\\\
\begin{theorem}\label{atkin}
	Let $u_{a,\alpha}(\lambda)$ as in the above definition.
If $u'_{a, \alpha}(\lambda,x)\not=0$, then $\forall x\in[a,b]\backslash \{p\}$
$$\frac{\partial}{\partial\lambda}\left\{\frac{u_{a,\alpha}(\lambda,x)}{u'_{a,\alpha}(\lambda,x)}\right\}=\frac{1}{u'_{a,\alpha}(\lambda,x)^2}\int_a^x u_{a,\alpha}(\lambda,t)^2dt$$
and if $u_{a,\alpha}(\lambda,x)\not=0$
$$\frac{\partial}{\partial\lambda}\left\{\frac{u'_{a,\alpha}(\lambda,x)}{u_{a,\alpha}(\lambda,x)}\right\}=-\frac{1}{u_{a,\alpha}(\lambda,x)^2}\int_a^x u_{a,\alpha}(\lambda,t)^2dt$$
		\end{theorem}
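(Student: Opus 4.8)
The plan is to follow the classical Atkinson argument, adapted to account for the point interaction at $p$. Write $u=u_{a,\alpha}(\lambda,x)$ and let $\dot u:=\frac{\partial}{\partial\lambda}u$; this $\lambda$-derivative exists and behaves well in $(\lambda,x)$ because $u_{a,\alpha}(\lambda,x)$ and $u'_{a,\alpha}(\lambda,x)$ are entire in $\lambda$ for each fixed $x$, as recorded just before the statement. By the quotient rule both identities reduce to one computation, since
$$\frac{\partial}{\partial\lambda}\Bigl\{\frac{u}{u'}\Bigr\}=\frac{\dot u\,u'-u\,\dot u'}{(u')^2},\qquad \frac{\partial}{\partial\lambda}\Bigl\{\frac{u'}{u}\Bigr\}=\frac{\dot u'\,u-u'\,\dot u}{u^2},$$
and the two numerators are $-W_x(u,\dot u)$ and $+W_x(u,\dot u)$ respectively, where $W_x(u,\dot u)=u\dot u'-u'\dot u$. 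Thus it suffices to prove the single fact $W_x(u,\dot u)=-\int_a^x u(\lambda,t)^2\,dt$.

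First I would differentiate the eigenvalue equation $-u''+Vu=\lambda u$ with respect to $\lambda$ to obtain $(\tau-\lambda)\dot u=u$, i.e. $-\dot u''+V\dot u-\lambda\dot u=u$ on $[a,b]\backslash\{p\}$. Differentiating the interaction conditions $u(p+)=u(p-)$ and $u'(p+)-u'(p-)=\alpha u(p)$, which hold for every $\lambda$ by the construction of $u_{a,\alpha}$, shows that $\dot u$ satisfies the \emph{same} jump conditions at $p$. Likewise, differentiating the boundary relation $u(a)\cos\theta+u'(a)\sin\theta=0$ gives $\dot u(a)\cos\theta+\dot u'(a)\sin\theta=0$.

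Next I would compute, on each side of $p$,
$$\frac{d}{dx}W_x(u,\dot u)=u\dot u''-u''\dot u=u\bigl(V\dot u-\lambda\dot u-u\bigr)-\bigl(Vu-\lambda u\bigr)\dot u=-u^2,$$
so $W_x(u,\dot u)$ has derivative $-u^2$ away from $p$. Because $\dot u$ obeys exactly the jump conditions of a genuine solution, the computation in the proof of Lemma \ref{wcont} applies verbatim and $W_x(u,\dot u)$ is continuous at $p$; hence, exactly as in Lemma \ref{wdop}, integrating across $p$ is legitimate and $W_x(u,\dot u)-W_a(u,\dot u)=-\int_a^x u(\lambda,t)^2\,dt$. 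It remains to check that $W_a(u,\dot u)=0$: using the two boundary relations above, either $\sin\theta\neq0$ (solve for $u'(a)$ and $\dot u'(a)$ and substitute) or $\sin\theta=0$ (then $u(a)=\dot u(a)=0$), and in both cases the determinant $u(a)\dot u'(a)-u'(a)\dot u(a)$ vanishes. Feeding $W_x(u,\dot u)=-\int_a^x u^2$ back into the quotient-rule expressions yields the two stated formulas. The only genuinely new point beyond the classical Atkinson case is verifying that $\dot u$ inherits the interaction conditions so that $W_x(u,\dot u)$ remains continuous at $p$; I expect this to be the main thing to get right, and once it is in hand the rest is the standard computation.
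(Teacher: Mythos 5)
Your proof is correct, but it takes a genuinely different route from the paper's. The paper never introduces $\dot u=\partial_\lambda u$ as the solution of a differential equation: it applies Lemma \ref{wdop} to the two genuine solutions $u_{a,\alpha}(\lambda)$ and $u_{a,\alpha}(\tilde\lambda)$, uses that $W_a\bigl(u_{a,\alpha}(\lambda),u_{a,\alpha}(\tilde\lambda)\bigr)=0$ because both satisfy the same boundary condition at $a$, rewrites the resulting identity
$$W_x\bigl(u_{a,\alpha}(\lambda),u_{a,\alpha}(\tilde\lambda)\bigr)=(\lambda-\tilde\lambda)\int_a^x u_{a,\alpha}(\lambda,t)\,u_{a,\alpha}(\tilde\lambda,t)\,dt$$
as difference quotients, and lets $\tilde\lambda\to\lambda$; all the point-interaction bookkeeping is thus delegated to the already-proven Lemma \ref{wdop}, and the only analytic input is entireness in $\lambda$ for fixed $x$ (plus an implicit passage of the limit under the integral sign). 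You instead run the variational-equation version of Atkinson's argument: you show $\dot u$ solves $(\tau-\lambda)\dot u=u$, inherits the jump conditions at $p$ and the boundary relation at $a$, compute $\frac{d}{dx}W_x(u,\dot u)=-u^2$ away from $p$, and integrate across $p$ using continuity of $W_x(u,\dot u)$, correctly observing that the computation of Lemma \ref{wcont} applies to the pair $(u,\dot u)$ because only the jump conditions enter it; your sign checks and the evaluation $W_a(u,\dot u)=0$ are also correct (indeed, by the paper's construction $u(a,\lambda)=\sin\theta$ and $u'(a,\lambda)=-\cos\theta$ are $\lambda$-independent, so $\dot u(a)=\dot u'(a)=0$ directly). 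As for what each buys: your route is more flexible --- it yields derivative formulas in any parameter and makes explicit why the interface conditions are harmless --- but it needs more than entireness pointwise in $x$, namely that $\dot u$ is sufficiently regular in $x$ and that $\partial_\lambda$ and $\partial_x^2$ commute, i.e.\ standard ODE parameter-dependence theory which you assert rather than prove; the paper's difference-quotient route avoids that entirely by only ever taking limits of scalar quantities, at the comparable (and equally glossed-over) cost of interchanging a limit with an integral. Both are standard omissions of the same order, so neither argument is invalidated.
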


	\textit{Proof.}	
If in Lemma \ref{wdop} we choose $u=u_{a,\alpha}(\lambda)$ and $v=u_{a,\alpha}(\tilde\lambda)$ then, for $x\in[a,b]\backslash\{p\}$ 
	 
 $$W_x(u_{a,\alpha}(\lambda),u_{a,\alpha}(\tilde\lambda))=(\lambda-\tilde\lambda)\int_a^x u_{a,\alpha}(\lambda,t)u_{a,\alpha}(\tilde\lambda,t)dt$$
since $W_a(u_{a,\alpha}(\lambda),u_{a,\alpha}(\tilde\lambda))=0$ 
	
Thus $\forall x\in[a,b]\backslash \{p\}$
	
	\[
	\begin{array}{rcl}
	\int_a^xu_{a,\alpha}(\lambda,t)u_{a,\alpha}(\tilde\lambda,t)dt&=&\frac{u_{a,\alpha}(\lambda)u'_{a,\alpha}(\tilde\lambda)-u'_{a,\alpha}(\lambda)u_{a,\alpha}(\tilde\lambda)}{\lambda-\tilde\lambda}\\\\
	
	&=&\frac{u_{a,\alpha}(\lambda)u'_{a,\alpha}(\tilde\lambda)-u_{a,\alpha}(\lambda)u'_{a,\alpha}(\lambda)+u_{a,\alpha}(\lambda)u'_{a,\alpha}(\lambda)-u'_{a,\alpha}(\lambda)u_{a,\alpha}(\tilde\lambda)}{\lambda-\tilde\lambda}\\\\
	
	&=&u'_{a,\alpha}(\lambda)\frac{u_{a,\alpha}(\lambda)-u_{a,\alpha}(\tilde\lambda)}{\lambda-\tilde\lambda}-u_{a,\alpha}(\lambda)\frac{u'_{a,\alpha}(\lambda)-u'_{a,\alpha}(\tilde\lambda)}{\lambda-\tilde\lambda}
	
	\end{array}
	\] 
	
	Letting $\tilde\lambda\rightarrow\lambda$, we get
		$$\int_a^x u_{a,\alpha}(\lambda,t)^2dt=u'_{a,\alpha}(\lambda,x)\frac{\partial}{\partial\lambda}u_{a,\alpha}(\lambda,x)-u_{a,\alpha}(\lambda,x)\frac{\partial}{\partial\lambda}u'_{a,\alpha}(\lambda,x)$$
	Dividing by $u'_{a,\alpha}(\lambda,x)^2$ we get the first equality and dividing by $u_a(\lambda,x)^2$ we obtain the second.
	\\\hfill\qed

\begin{definition}\label{Ge}
Let us define for $z\in\mathbb{C}$ 
$$G_\alpha(z,x,x)=\frac{u_{a,\alpha}(z,x)u_{b,\alpha}(z,x)}{W_x(u_{a,\alpha}(z),u_{b,\alpha}(z))}$$
\end{definition}
This happens to be the Green function of a selfadjoint operator, but we shall not use that.\\

For the next Theorem we got important input from G. Teschl.
\begin{theorem}\label{compa}
	For any $\alpha\in\mathbb{R}$ we have
	$$G_\alpha(z;p,p)=G_0(z;p,p)\frac{1}{1-\alpha G_0(z;p,p)}$$
\end{theorem}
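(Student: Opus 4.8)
The plan is to exploit the explicit gluing construction of the solutions $u_{a,\alpha}$ and $u_{b,\alpha}$ given after Definition \ref{solution}, reducing everything to the classical ($\alpha=0$) solutions and their Wronskian. Write $\phi:=u_{a,0}(z)$ and $\psi:=u_{b,0}(z)$ for the two classical solutions fixed by the boundary conditions at $a$ and $b$ respectively. Conceptually this identity is the scalar Krein/Aronszajn rank-one perturbation formula, since a $\delta$-interaction of strength $\alpha$ at $p$ is a rank-one perturbation; but because the statement only involves the explicit object $G_\alpha$ of Definition \ref{Ge}, I would prove it by direct computation rather than invoking resolvent theory (in line with the remark following Definition \ref{Ge}).

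First I would record how $u_{a,\alpha}$ and $u_{b,\alpha}$ relate to $\phi$ and $\psi$ near $p$. By construction $u_{a,\alpha}$ agrees with $\phi=w_1$ on $[a,p]$, so in particular $u_{a,\alpha}(z,p)=\phi(p)$; the right piece $w_2$ was chosen precisely so that $u_{a,\alpha}(p+)=\phi(p)$ and $u_{a,\alpha}'(p+)=\phi'(p)+\alpha\,\phi(p)$. By the mirror construction for $l=b>p$, the solution $u_{b,\alpha}$ agrees with $\psi$ on $[p,b]$, hence $u_{b,\alpha}(z,p)=\psi(p)$, $u_{b,\alpha}(p+)=\psi(p)$ and $u_{b,\alpha}'(p+)=\psi'(p)$. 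The only care needed here is to track one-sided values correctly and to check the orientation of the $l=b$ construction against the jump condition $f'(p+)-f'(p-)=\alpha f(p)$; this is the single step where a sign could slip, so I expect it to be the main (and essentially the only) obstacle.

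With these values in hand, the Wronskian of Definition \ref{wron}, evaluated at $p+$, becomes
$$W_p(u_{a,\alpha},u_{b,\alpha})=\phi(p)\psi'(p)-\bigl[\phi'(p)+\alpha\,\phi(p)\bigr]\psi(p)=W_p(\phi,\psi)-\alpha\,\phi(p)\psi(p),$$
while the numerator of $G_\alpha(z;p,p)$ is unchanged, equal to $\phi(p)\psi(p)$. Thus
$$G_\alpha(z;p,p)=\frac{\phi(p)\psi(p)}{W_p(\phi,\psi)-\alpha\,\phi(p)\psi(p)}.$$

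Finally I would divide numerator and denominator by the classical Wronskian $W_p(\phi,\psi)=W_p(u_{a,0},u_{b,0})$ and recognize $G_0(z;p,p)=\phi(p)\psi(p)/W_p(\phi,\psi)$ straight from Definition \ref{Ge}, which rewrites the right-hand side as $G_0/(1-\alpha G_0)$, exactly as claimed. No genuine difficulty remains beyond the bookkeeping of one-sided data at $p$: the continuity of the Wronskian (Lemma \ref{wcont}) makes the evaluation at $p$ unambiguous, and the constancy in $x$ of the classical Wronskian means $W_p(\phi,\psi)$ is simply the usual Wronskian of the two base solutions.
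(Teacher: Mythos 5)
Your proposal is correct and is essentially the paper's own proof: both arguments use that $u_{a,\alpha}$ coincides with $u_{a,0}$ on $[a,p]$ and $u_{b,\alpha}$ with $u_{b,0}$ on $[p,b]$, insert the jump condition $u'_{a,\alpha}(p+)=u'_{a,0}(p+)+\alpha u_{a,0}(p)$ into the Wronskian at $p$, and divide by $W(u_{a,0},u_{b,0})$ to obtain $G_\alpha=G_0/(1-\alpha G_0)$. The sign bookkeeping you flag as the only delicate point works out exactly as you computed, matching the paper's displayed denominator $W(u_{a,0},u_{b,0})-\alpha u_{a,0}(p)u_{b,0}(p)$.
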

\textit{Proof.} 
If $x\leq p$, $u_{a,0}(x)=u_{a,\alpha}(x)$ and if $x\geq p$, $u_{b,0}(x)=u_{b,\alpha}(x)$. \\\\
Now, from the condition at $p$
$$u'_{a,\alpha}(p+)=u'_{a,\alpha}(p-)+\alpha  u_{a,\alpha}(p)=u'_{a,0}(p-)+\alpha  u_{a,0}(p)=u'_{a,0}(p+)+\alpha u_{a,0}(p).$$

Using this in $G_\alpha$ we get
$$G_\alpha(z,p,p)=\frac{u_{a,\alpha}(p) u_{b,\alpha}(p)}{W( u_{a,\alpha}, u_{b,\alpha})}=\frac{ u_{a,\alpha}(p) u_{b,\alpha}(p)}{u_{a,\alpha}(p) u'_{b,\alpha}(p+)-u'_{a,\alpha}(p+) u_{b,\alpha}(p)}=$$
$$=\frac{u_{a,0}(p)u_{b,0}(p)}{u_{a,0}(p)u'_{b,0}(p+)-u'_{a,0}(p+)u_{b,0}(p)-\alpha u_{a,0}(p)u_{b,0}(p)}
$$
Then
$$G_\alpha(z,p,p)=\frac{u_{a,0}(p)u_{b,0}(p)}{W(u_{a,0},u_{b,0})\left(1-\alpha\frac{u_{a,0}(p)u_{b,0}(p)}{W(u_{a,0},u_{b,0})}\right)}=G_0(z,p,p)\frac{1}{1-\alpha G_0(z,p,p)}$$
\hfill\qed

\begin{corolary}\label{corocompa}
	If $G_\alpha:=G_\alpha(z;p,p)$, then $\forall \alpha,\beta\in\mathbb{R}$, $\alpha\not=\beta$, 
	$$G_\beta=\frac{G_\alpha}{1+(\alpha-\beta)G_\alpha}.$$
\end{corolary}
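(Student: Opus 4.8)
The plan is to reduce both quantities to the interaction-free Green function $G_0 := G_0(z;p,p)$ and then eliminate it algebraically. By Theorem \ref{compa}, applied once with the coupling $\alpha$ and once with the coupling $\beta$, I have the two fractional-linear relations
$$G_\alpha = \frac{G_0}{1 - \alpha G_0}, \qquad G_\beta = \frac{G_0}{1 - \beta G_0}.$$
Since both $G_\alpha$ and $G_\beta$ are tied to the \emph{same} $G_0$, the strategy is simply to solve the first relation for $G_0$ in terms of $G_\alpha$ and substitute the result into the second.

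Carrying this out, I would clear the denominator in the first relation to obtain $G_\alpha(1 - \alpha G_0) = G_0$, which rearranges to $G_\alpha = G_0(1 + \alpha G_\alpha)$ and hence
$$G_0 = \frac{G_\alpha}{1 + \alpha G_\alpha}.$$
Inserting this into the formula for $G_\beta$ and multiplying numerator and denominator by $1 + \alpha G_\alpha$ to clear the nested fraction gives
$$G_\beta = \frac{G_0}{1 - \beta G_0} = \frac{G_\alpha}{(1 + \alpha G_\alpha) - \beta G_\alpha} = \frac{G_\alpha}{1 + (\alpha - \beta)G_\alpha},$$
which is exactly the asserted identity. (The hypothesis $\alpha \neq \beta$ is not needed for the algebra; when $\alpha = \beta$ the right-hand side collapses to $G_\alpha$, as it must.)

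There is essentially no analytic difficulty here, so the ``main obstacle'' is only a matter of bookkeeping on the denominators. The inversion step requires $1 + \alpha G_\alpha \neq 0$, which is equivalent to $1 - \alpha G_0 \neq 0$, and this is precisely the condition under which Theorem \ref{compa} already delivers a finite value for $G_\alpha$; likewise the final expression is meaningful exactly when $1 + (\alpha - \beta)G_\alpha \neq 0$, i.e.\ when $G_\beta$ is itself finite. Thus the whole argument is a single rational manipulation of the relations supplied by the previous theorem, valid wherever both sides are defined.
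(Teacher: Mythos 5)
Your proof is correct and follows essentially the same route as the paper: both apply Theorem \ref{compa} for the couplings $\alpha$ and $\beta$ and eliminate $G_0$ algebraically. The only cosmetic difference is that the paper passes to reciprocals, $\tfrac{1}{G_0}=\tfrac{1}{G_\alpha}+\alpha=\tfrac{1}{G_\beta}+\beta$, and therefore must treat $G_0=0$ as a separate case, whereas your direct solve-and-substitute for $G_0$ never divides by it and so absorbs that case automatically.
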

\noindent\textit{Proof.} From theorem \ref{compa}, if $G_0=0$, $G_\alpha=0$, $\forall\alpha\in\mathbb{R}$. If $G_0\not=0$,
$$G_\alpha=\frac{1}{\frac{1}{G}-\alpha}.$$
Then
$$\frac{1}{G_0}=\frac{1}{G_\alpha}+\alpha=\frac{1}{G_\beta}+\beta$$
Hence
$$G_\beta=\frac{G_\alpha}{1+(\alpha-\beta)G_\alpha}.$$
\hfill\qed

	Suppose now $\tau_{\alpha,p}$ regular at $a$ and $b$, i.e $a$ and $b$ finite, $V\in L^1([a,b])$. Let us consider the selfadjoint restriction $H_{\alpha,p}$ of $T_{\alpha,p}$ in $L_2(a,b)$, see Theorem 5.2 in \cite{BSW}, defined by
	\begin{equation}\label{halfa}
	H_{\alpha,p} f=\tau f\end{equation}
	\[
	\begin{array}{ccc}
	D(H_{\alpha,p})&=&\left\{f\in D(T_{\alpha,p}):
	\begin{array}{c}
	f(a)cos\theta+f'(a)sen\theta=0 \\ 
	{f(b)cos\gamma+f'(b)sen\gamma=0}
	\end{array}
	\right\}\qquad\qquad  \theta,\,\gamma\in [0,\pi).
	\end{array}\]

\begin{theorem}\label{polo}
	Let $E$ eigenvalue of $H_{\alpha,p}$, then $G_\alpha(E,p,p)=0$ or $G_\alpha(z,p,p)$ has a pole in $E$.
\end{theorem}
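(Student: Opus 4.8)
The plan is to translate the spectral condition into a statement about the Wronskian appearing in the denominator of $G_\alpha$, and then read off the dichotomy by comparing orders of vanishing of numerator and denominator at $E$. First I would observe that, for two solutions of the same equation $\tau_{\alpha,p}u=zu$, the Wronskian $W_x(u_{a,\alpha}(z),u_{b,\alpha}(z))$ is independent of $x$: it is constant on each side of $p$ by Lemma \ref{wdop} applied with $\lambda_0=\lambda=z$, and continuous across $p$ by Lemma \ref{wcont}. Writing $W(z)$ for this number, the denominator of $G_\alpha(z,p,p)$ is exactly $W(z)$. Since the solution space of $(\tau_{\alpha,p}-z)u=0$ is two-dimensional, I would then show that $E\in\sigma_p(H_{\alpha,p})$ if and only if $W(E)=0$: indeed $u_{a,\alpha}(E)$ satisfies the boundary condition at $a$ and $u_{b,\alpha}(E)$ the one at $b$, so an eigenfunction must be proportional to each; conversely, if $W(E)=0$ then $u_{a,\alpha}(E)$ and $u_{b,\alpha}(E)$ are proportional, producing a nontrivial element of $D(H_{\alpha,p})$ solving the equation.

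Next I would use that both $N(z):=u_{a,\alpha}(z,p)u_{b,\alpha}(z,p)$ and $D(z):=W(z)$ are entire in $z$ (entirety of $u_{\cdot,\alpha}$ and its derivative is quoted in the excerpt), so $G_\alpha(\cdot,p,p)=N/D$ is meromorphic. At an eigenvalue $E$ we have $D(E)=0$, and the statement reduces to comparing the orders of the zeros. If $u_{a,\alpha}(E,p)\neq0$ then, by proportionality, $u_{b,\alpha}(E,p)\neq0$ as well, so $N(E)\neq0$ while $D(E)=0$ and $G_\alpha$ has a pole at $E$. If instead $u_{a,\alpha}(E,p)=0$, then the eigenfunction vanishes at $p$, hence $u_{b,\alpha}(E,p)=0$ too, and $N$ has a zero of order at least two at $E$; it then remains only to control the order of the zero of $D$.

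The decisive and most delicate step is therefore to prove that $W$ has a \emph{simple} zero at every eigenvalue $E$; granting this, $\mathrm{ord}_E N\geq 2>1=\mathrm{ord}_E D$ forces $G_\alpha(E,p,p)=0$, completing the dichotomy. I would establish simplicity by differentiating $W(z)=W_x(u_{a,\alpha}(z),u_{b,\alpha}(z))$ in $z$ and invoking the identity produced inside the proof of Theorem \ref{atkin}, namely $W_x(u_{a,\alpha}(z),\partial_z u_{a,\alpha}(z))=-\int_a^x u_{a,\alpha}(z,t)^2\,dt$, together with its mirror image based at $b$, $W_x(u_{b,\alpha}(z),\partial_z u_{b,\alpha}(z))=\int_x^b u_{b,\alpha}(z,t)^2\,dt$. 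Writing $u_{b,\alpha}(E)=c\,u_{a,\alpha}(E)$ with $c\neq0$ and expanding $W'(E)=W_x(\partial_z u_{a,\alpha}(E),u_{b,\alpha}(E))+W_x(u_{a,\alpha}(E),\partial_z u_{b,\alpha}(E))$, the two integrals combine to give $W'(E)=c\int_a^b u_{a,\alpha}(E,t)^2\,dt$, which is nonzero because $u_{a,\alpha}(E)$ is a nontrivial real eigenfunction (as a consistency check, the $x$-dependence cancels, as it must). The obstacle I anticipate is precisely this simplicity computation: one must differentiate in $z$ and apply the Atkinson identities separately on each side of $p$, handling the matching condition at the interaction point and patching the two pieces together via the continuity of the Wronskian at $p$.
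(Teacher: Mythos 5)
Your proposal is correct, and its skeleton coincides with the paper's: both split on whether $u_{a,\alpha}(E,p)$ vanishes, obtain a pole when it does not, and, when it does, compare a zero of order $\geq 2$ in the numerator $u_{a,\alpha}(z,p)u_{b,\alpha}(z,p)$ against a \emph{simple} zero of the Wronskian. Where you genuinely diverge is in how that simple zero is established. The paper never differentiates $W(z)$ itself: it fixes $u_{b,\alpha}$ by $\lambda$-independent initial data at $b$, so that $W$ becomes $u_{a,\alpha}(\lambda,b)\cos\theta+u'_{a,\alpha}(\lambda,b)\sin\theta$, then divides by $u'_{a,\alpha}(\lambda,b)$ (or by $u_{a,\alpha}(\lambda,b)$, whence a case distinction), applies Theorem \ref{atkin} in its stated quotient form at $x=b$, and uses linear dependence at $E$ to see that the surviving factor $\cos\theta$ is nonzero. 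You instead compute $W'(E)$ directly by the product rule, feeding in the two bilinear identities $W_x\bigl(u_{a,\alpha}(z),\partial_z u_{a,\alpha}(z)\bigr)=-\int_a^x u_{a,\alpha}(z,t)^2dt$ and $W_x\bigl(u_{b,\alpha}(z),\partial_z u_{b,\alpha}(z)\bigr)=\int_x^b u_{b,\alpha}(z,t)^2dt$, which yields $W'(E)=c\int_a^b u_{a,\alpha}(E,t)^2dt\neq 0$ in one stroke. The first of these identities is precisely the intermediate display in the paper's proof of Theorem \ref{atkin} (before dividing); the second is not stated in the paper but follows verbatim from Lemma \ref{wdop} with base point $b$, and both are valid on $[a,b]\setminus\{p\}$ because Lemmas \ref{wcont} and \ref{wdop} already absorb the jump condition at the interaction point, exactly as you anticipated. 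Your route buys symmetry and economy: no normalization trick at $b$, no case split on $u'_{a,\alpha}(E,b)\neq 0$ versus $u_{a,\alpha}(E,b)\neq 0$, and a single formula for $W'(E)$ that in fact proves simplicity of the Wronskian's zero at \emph{every} eigenvalue, not only in the sub-case $u_{a,\alpha}(E,p)=0$. The paper's route buys the convenience of quoting Theorem \ref{atkin} exactly as stated rather than reopening its proof to extract the undivided identity and derive its mirror at $b$.
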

\textit{Proof.} Let $u_{a,\alpha}(E,x)$ and $u_{b,\alpha}(E,x)$ solutions of $(H_{\alpha,p}-E)u=0$ which satisfy the boundary conditions at $a$ and $b$ respectively. Then $u_{a,\alpha}$ and $u_{b,\alpha}$ are linearly dependent and $W(u_{a,\alpha}(E),u_{b,\alpha}(E))=0$. See \cite{BSW}, Lemma 4.2.

\begin{itemize}
\item If $u_{a,\alpha}(E,p)\not=0$, then
\begin{equation}\label{case1}
\lim_{z\rightarrow E}|G_\alpha(z,p,p)|=\lim_{z\rightarrow E}\left|\frac{u_{a,\alpha}(z,p)u_{b,\alpha}(z,p)}{W(u_{a,\alpha}(z),u_{b,\alpha}(z))}\right|=\infty
\end{equation}

\item Now we consider the case $u_{a,\alpha}(E,p)=0$.  Let $u_{b,\alpha}$ be a solution such that $u_{b,\alpha}(E,b)=-\sin\theta$ and $u_{b,\alpha}'(E,b)=cos\theta$, $\theta\in[0,\pi)$. Since, for each $z$ fixed, $W_x(u_{a,\alpha}(z),u_{b,\alpha}(z))$ is constant for all $x$ in $[a,b]$,then
$$W_x(u_{a,\alpha}(E),u_{b,\alpha}(E))=W_b(u_{a,\alpha}(E),u_{b,\alpha}(E))=u_{a,\alpha}(E,b)cos\theta+ u'_{a,\alpha}(E,b)\sin\theta$$

 The functions $u_{a,\alpha}'(E,b)$ and $u_{a,\alpha}(E,b)$ cannot vanish simultaneously. Assume for example that  $u_{a,\alpha}'(E,b)\not=0$. 
 $$\frac{\partial}{\partial\lambda}\left[\frac{W_p(u_{a,\alpha}(\lambda),u_{b,\alpha}(\lambda))}{u'_{a,\alpha}(\lambda,b)}\right]_{\lambda=E}=$$
 $$=\frac{\partial}{\partial\lambda}\left[\frac{u_{a,\alpha}(\lambda,b)}{u'_{a,\alpha}(\lambda,b)}cos\theta+\sin\theta\right]_{\lambda=E}=\frac{cos\theta}{u'_{a,\alpha}(E,b)^2}\int_a^bu_{a,\alpha}(E,t)^2dt\not=0$$
where for the last equality Theorem \ref{atkin} was used.  Since we are assuming that $E$ is an eigenvalue, the functions $u_{a,\alpha}(E,b)$ and $u_{b,\alpha}(E,b)$ are linearly dependent. Hence $u_{a,\alpha}'(E,b)=Cu_{b,\alpha}'(E,b)=C\cos\theta\not=0$.
Therefore
$$\frac{W_p(u_{a,\alpha}(\lambda),u_{b,\alpha}(\lambda))}{u'_{a,\alpha}(\lambda,b)}$$
 has  a zero  at $E$ of order one and since $u_{a,\alpha}'(E,b)\not=0$ so does  $W_p(u_{a,\alpha}(\lambda),u_{b,\alpha}(\lambda))$. Since the zero at $E$ of the numerator of $G(z,p,p)$ is of order two, we obtain

 \begin{equation}\label{G}
\lim\limits_{z\rightarrow E}G_\alpha(z,p,p)=0
\end{equation}

If $u_{a,\alpha}'(E,b)=0$, we can assume that $u_{a,\alpha}(E,b)\not=0$. An analogous construction yields 
$$\frac{\partial}{\partial\lambda}\left[\frac{W_p(u_{a,\alpha}(\lambda),u_{b,\alpha}(\lambda))}{u_{a,\alpha}(\lambda,b)}\right]_{\lambda=E}=-\frac{\sin\theta}{u_{a,\alpha}(E,b)^2}\int_a^bu_{a,\alpha}(E,t)^2dt\not=0$$
\end{itemize}
and therefore (\ref{G}) follows.			
\qed		

The eigenvalues of an operator $L$ will be denoted by $\sigma_p(L)$ that is
$$\sigma_p(L):=\{r\in\mathbb{R}:L\varphi=r\varphi \mbox{ for some } \varphi\not\equiv 0,\,\, \varphi\in D(L)\}$$

\begin{theorem}\label{lemteoad}
	Let $E\in\mathbb{R}$ fixed. Then for the set
	$$A(E):=\{\alpha\in\mathbb{R}: E\in\sigma_p(H_{\alpha,p} )  \}$$
	there are two possibilities:
	\begin{itemize}
		\item[a)] $A(E)$ has at most one element.
		\item[b)] $A(E)=\mathbb{R}.$
	\end{itemize}
\end{theorem}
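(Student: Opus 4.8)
The plan is to reduce the eigenvalue condition to a single affine equation in the coupling constant $\alpha$, and then read off the dichotomy from the elementary fact that an affine equation in one real unknown has either at most one solution or every real number as a solution.

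First I would isolate the computation already carried out inside the proof of Theorem \ref{compa}. There the denominator of $G_\alpha$ was rewritten through the classical ($\alpha=0$) solutions, and the same manipulation yields
$$W(u_{a,\alpha}(z),u_{b,\alpha}(z))=W(u_{a,0}(z),u_{b,0}(z))-\alpha\,u_{a,0}(z,p)\,u_{b,0}(z,p),$$
an entire function of $z$ which is, crucially, affine in $\alpha$. Next I would recall the standard Sturm--Liouville fact (Lemma 4.2 in \cite{BSW}, already invoked in the proof of Theorem \ref{polo}) that on the regular interval $[a,b]$ the value $E$ is an eigenvalue of $H_{\alpha,p}$ exactly when the two boundary solutions $u_{a,\alpha}(E)$ and $u_{b,\alpha}(E)$ are linearly dependent, i.e. exactly when $W(u_{a,\alpha}(E),u_{b,\alpha}(E))=0$. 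Here the Wronskian is a genuine constant across $p$, by Lemma \ref{wcont} and Lemma \ref{wdop} with $\lambda_0=\lambda=E$.

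Combining these two facts and abbreviating $N:=u_{a,0}(E,p)\,u_{b,0}(E,p)$ and $D:=W(u_{a,0}(E),u_{b,0}(E))$, both fixed real numbers once $E$ is fixed, I would conclude that
$$\alpha\in A(E)\iff D-\alpha N=0.$$
The case analysis is then immediate. If $N\neq 0$, this equation has the unique root $\alpha=D/N$, so $A(E)$ is a singleton. If $N=0$ but $D\neq 0$, it has no solution and $A(E)=\emptyset$. In both of these situations $A(E)$ has at most one element, giving alternative (a). The only remaining possibility is $N=0$ and $D=0$; then $D-\alpha N=0$ holds for every real $\alpha$, so $A(E)=\mathbb{R}$, which is alternative (b). This exhausts all cases.

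The step I expect to require the most care is the passage from ``$E$ is an eigenvalue'' to ``$W(u_{a,\alpha}(E),u_{b,\alpha}(E))=0$'': one must check that $u_{a,\alpha}$ and $u_{b,\alpha}$ are the correct objects, namely solutions in the sense of Definition \ref{sol} satisfying the jump condition at $p$, that the vanishing of their Wronskian is insensitive to the scalar normalization of each solution, and that on a regular interval a linearly dependent pair really does produce an $L^2$ eigenfunction meeting both boundary conditions. One should also confirm that the affine identity for the Wronskian holds at the fixed point $z=E$ and not merely at a generic $z$. Once this bookkeeping is settled, the linearity in $\alpha$ does all the work, and no further oscillation-theoretic or analytic input is needed.
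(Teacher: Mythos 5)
Your proof is correct, but it takes a genuinely different route from the paper's. The paper fixes $\alpha\in A(E)$, takes an eigenfunction $u$, and splits on whether $u(p)=0$: if it vanishes, the same $u$ is an eigenfunction of $H_{\beta,p}$ for every $\beta$, giving case (b); if not, equation (\ref{case1}) shows $G_\alpha(z,p,p)$ has a pole at $E$, Corollary \ref{corocompa} then forces $G_\beta(E;p,p)=1/(\beta-\alpha)$ for $\beta\neq\alpha$, and Theorem \ref{polo} (whose proof needs the differentiation formula of Theorem \ref{atkin} to rule out poles when the eigenfunction vanishes at $p$) excludes $E$ from $\sigma_p(H_{\beta,p})$, giving case (a). You instead bypass the Green's function, the pole/zero analysis of Theorem \ref{polo}, and Theorem \ref{atkin} entirely: you extract from the computation inside Theorem \ref{compa} the identity
$$W\bigl(u_{a,\alpha}(E),u_{b,\alpha}(E)\bigr)=W\bigl(u_{a,0}(E),u_{b,0}(E)\bigr)-\alpha\,u_{a,0}(E,p)\,u_{b,0}(E,p),$$
which is legitimate since both Wronskians are constant in $x$ by Lemmas \ref{wcont} and \ref{wdop} and the jump condition is purely algebraic at any fixed $z=E$, and you combine it with the two-sided characterization ``$E\in\sigma_p(H_{\alpha,p})$ iff the Wronskian of the two boundary solutions vanishes.'' Note that the paper only invokes the forward direction of this characterization (via Lemma 4.2 of \cite{BSW}); the converse you need is the easy direction (linear dependence produces an eigenfunction on a regular interval), and you correctly flag it as the point requiring care. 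What your approach buys is a shorter, more self-contained argument that additionally yields an explicit formula $\alpha=D/N$ for the unique coupling in case (a) and distinguishes $A(E)=\emptyset$ from $A(E)$ a singleton; what the paper's approach buys is the Krein-type resolvent comparison (Theorem \ref{compa}, Corollary \ref{corocompa}), machinery that carries meaning beyond this theorem, plus a proof that directly exhibits the eigenfunction criterion recorded in Remark \ref{observa1}, which in your setup would need the extra (easy) observation that $N=D=0$ is equivalent to the unperturbed eigenfunction vanishing at $p$.
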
		
\textit{Proof.}\\

If $u(p)=0$, then $u'(p+)-u'(p-)=\beta u(p)$, that is, $u\in D(H_{\beta,p}),\,\forall\beta\in\mathbb{R}$ and $H_{\beta,p} u=Eu$. Then, $E$ is eigenvalue of $H_{\beta,p}$.\\

If $u(p)\not=0$, from equation  (\ref{case1}) we get  $G_\alpha(E,p,p)=\infty$. By Corollary \ref{corocompa}, $\forall\beta\in\mathbb{R}\backslash\{\alpha\}$,
\begin{equation}\label{beta}
G_\beta(E;p,p)=\frac{1}{\beta-\alpha}
\end{equation}
 Since the right hand side of (\ref{beta}) is neither  zero  nor $\infty$, from Theorem \ref{polo} follows that  $E$ is not eigenvalue of $H_{\beta,p}$.\\
\hfill\qed	

\begin{remark}\label{observa1}
	Case $a)$ happens if the eigenvector $u$ associated to $E$ is such that $u(p)\not=0$, otherwise case $b)$ holds.
\end{remark}


\section{Sturm-Liouville Operators With Countably Many $\delta$-Point Interactions} \label{ndelta}
Up to this point, every result was about the operator with a single point interaction in the regular case. Now we will consider countable many point interactions. The results of the previous section will be used here.\\

Let  $-\infty\leq a<b\leq\infty$, $V\in L_{loc}^1(a,b)$ be a real valued function. Fix a discrete set $M$ of points accumulating at most at $a$ or $b$, $M:=\{x_n\}_{n\in I}\subset (a,b)$ where $I\subseteq\mathbb{Z}$ and let $\{\alpha_n\}\subset\mathbb{R}$.
We set $\alpha=\alpha_{n_0}$ and consider the formal differential expressions
\begin{equation}\label{tau}
\tau:=-\frac{d^2}{dx^2}+V \end{equation}
$$\tau_{\alpha,M}:=-\frac{d^2}{dx^2}+V(x)+\sum_{n\in I\backslash\{n_0\}}\alpha_n\delta(x-x_n)+\alpha\delta(x-x_{n_0})$$
The maximal operator $T_{\alpha,M}$ corresponding to $\tau_{\alpha,M}$ is defined by
$$T_{\alpha,M} f=\tau f$$
$$D(T_{\alpha,M})=\{f\in L^2(a,b):\,f,\,f'\mbox{ abs. cont in }(a,b)\backslash M,-f''+Vf\in L^2(a,b),$$
$$f(x_n+)=f(x_n-),\,f'(x_n+)-f'(x_n-)=\alpha_n f(x_n),\,\forall n\in I\}$$
 Analogous to what was done in the previous section, we introduce the following definitions.
\begin{definition}
Given $g\in L_{loc}^1(a,b)$ and $z\in\mathbb{C}$, we call $f$ a solution of $(\tau_{\alpha,M}-z)f=g$ if $f$ and $f'$ are absolutely continuous in $(a,b)\backslash M$ with $-f''+Vf-zf=g$ and $f(x_n+)=f(x_n-)$, $f'(x_n+)-f'(x_n-)=\alpha_n f(x_n),\, \forall n\in I$.
\end{definition}

\begin{definition}
We define the Wronskian of two solutions $u_1$ and $u_2$ of $(\tau_{\alpha,M} -z)f=0$ as in definition \ref{wron}, namely 
$$W_x(u_1,u_2)=u_1(x+)u'_2(x+)-u'_1(x+)u_2(x+)$$
\end{definition}

\begin{definition}
For $f,g\in D(T_{\alpha,M})$ we define the Lagrange bracket by
$$[f,g]_x=\overline {f(x+)}g'(x+)-\overline{f'(x+)}g(x+).$$
\end{definition}

The limits $[f,g]_a=\lim_{x\rightarrow a+}[f,g]_x$ and $[f,g]_b=\lim_{x\rightarrow b-}[f,g]_x$ exist. See Theorem 2.2 \cite{BSW}\\

A solution of $(\tau_{\alpha,M}-z)f=0$ is said to lie right (left) in $L^2(a,b)$, if $f$ is square integrable in a neighborhood of $b$ $(a)$.

\begin{definition}
\hfill
\begin{itemize}
\item[$i)$]
$\tau_{\alpha,M}$ is in the \textbf{limit circle case} (lcc) at $b$ if for every $z\in\mathbb{C}$ all solutions of $(\tau_{\alpha,M}-z)f=0$ lie right in $L^2(a,b)$.
\item[$ii)$] $\tau_{\alpha,M}$ is in the \textbf{limit point case} (lpc) at $b$ if for every $z\in\mathbb{C}$ there is at least one solution of $(\tau_{\alpha,M}-z)f=0$ not lying right in $L^2(a,b)$. 
\end{itemize}
The same definition applies to the endpoint $a$.
\end{definition}
According to the \textit{Weyl Alternative}, see \cite{BSW} Theorem 4.4, we have always either $i)$ or $ii)$. 


Consider the selfadjoint restriction $H_{\alpha,M}$ of $T_{\alpha,M}$ en $L_2(a,b)$ defined as
\begin{equation}\label{restriction}
H_{\alpha,M} f=\tau f
\end{equation}

\[
\begin{array}{ccc}
D(H_{\alpha,M})&=&\left\{f\in D(T_{\alpha,M}):
\begin{array}{c}
[v,f]_a=0 \mbox{     if $\tau_{\alpha,M}$ lcc at $a$ }\\ 
{[w,f]_b=0 }\mbox{     if $\tau_{\alpha,M}$ lcc at $b$}
\end{array}
\right\}.
\end{array}\]

Where $v$ and $w$ are non-trivial real solutions of $(\tau_{\alpha,M}-\lambda)v=0$ near $a$ and near $b$ respectively, $\lambda\in\mathbb{R}$. See Theorem 5.2 \cite{BSW} \\

\begin{definition}\label{regu}
	We say $\tau_{\alpha,M}$ is regular at $a$ if  $a$ is finite, $V\in L^1_{loc}[a,b)$ and $a$ is not an accumulation point of $M$. The same definition applies to the endpoint $b$.
\end{definition}

If $\tau_{\alpha,M}$ is regular at $a$, then  $\tau_{\alpha,M}$ is lcc at $a$ and the condition $[v,f]_a=0$ can be replaced by $$f(a)cos\psi+f'(a)\sin\psi=0$$ for $\psi\in[0,\pi)$. The same holds for $b$.\\

For $\gamma,\theta\in[0,\pi)$ and $[c,d]\subset[a,b]$, such that $[c,d]\cap M=\{x_{n_0}\}$, define the operator  $$H_{\alpha}^{\theta\gamma}:= H_{\alpha,x_{n_0}}$$ where $H_{\alpha,x_{n_0}}$ is as in formula (\ref{halfa}) of the previous section with $p=x_{n_0}$ and $J=[c,d]$. 


Let $E\in\mathbb{R}$ fixed and define
$$A(E):=\{\alpha\in\mathbb{R}:E\in\sigma_p(H_{\alpha,M}\}$$

\begin{lemma}\label{lema1del}
	There exist $\theta_0,\gamma_0\in[0,\pi)$ such that if $\alpha\in A(E)$, then $E\in\sigma_p(H_\alpha^{\theta_0\gamma_0})$.
\end{lemma}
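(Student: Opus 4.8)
The idea is to read the two angles off a single left and a single right solution of the full equation at $E$, and to use that the placement $x_{n_0}\in(c,d)$ hides the coupling $\alpha=\alpha_{n_0}$ on the complementary intervals $[a,c]$ and $[d,b]$. Fix $E$ and let $u_a$ be the left solution of $(\tau_{\alpha,M}-E)u=0$, that is, the solution selected by the endpoint condition at $a$ when $\tau_{\alpha,M}$ is lcc there, or the (unique up to a scalar) solution lying left in $L^2(a,b)$ when it is lpc at $a$. On $[a,c]$ the equation only feels the interactions at the points $x_n<c$, all carrying the fixed couplings $\alpha_n$ with $n\neq n_0$, and the condition defining $u_a$ depends only on the behavior of $u$ near $a$, where the equation is free of $\alpha$; hence $u_a|_{[a,c]}$, and in particular the pair $(u_a(c),u_a'(c))$, does not depend on $\alpha$. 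Let $u_b$ be the right solution, defined symmetrically, so that $(u_b(d),u_b'(d))$ likewise does not depend on $\alpha$.

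I would then let $\theta_0\in[0,\pi)$ be the unique angle with $u_a(c)\cos\theta_0+u_a'(c)\sin\theta_0=0$ and $\gamma_0\in[0,\pi)$ the unique angle with $u_b(d)\cos\gamma_0+u_b'(d)\sin\gamma_0=0$. These are well defined because a nontrivial solution cannot have $u(c)=u'(c)=0$: vanishing Cauchy data at a point propagate through the jump relations and, by uniqueness for the ODE, force $u\equiv0$. By the previous paragraph $\theta_0$ and $\gamma_0$ depend on $E$ but are independent of $\alpha$, which is exactly the uniformity the statement demands.

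Now take $\alpha\in A(E)$ and let $\varphi\not\equiv0$ be an eigenfunction of $H_{\alpha,M}$ for $E$. Since $\varphi$ meets the left endpoint condition, it spans the same one-dimensional space of solutions as $u_a$, so $\varphi=\kappa\,u_a$ on $[a,c]$; by linearity $\varphi(c)\cos\theta_0+\varphi'(c)\sin\theta_0=\kappa\,[u_a(c)\cos\theta_0+u_a'(c)\sin\theta_0]=0$, and symmetrically $\varphi(d)\cos\gamma_0+\varphi'(d)\sin\gamma_0=0$. Restricting $\varphi$ to $[c,d]$, whose only interaction point is $x_{n_0}$, yields a solution of $(\tau_{\alpha,x_{n_0}}-E)u=0$ there which lies in $L^2(c,d)$ and satisfies the boundary conditions $\theta_0$ at $c$ and $\gamma_0$ at $d$; hence $\varphi|_{[c,d]}\in D(H_\alpha^{\theta_0\gamma_0})$ with $H_\alpha^{\theta_0\gamma_0}\varphi=E\varphi$ on $[c,d]$.

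It remains to see that $\varphi|_{[c,d]}\not\equiv0$, and this is the point requiring the most care. If it vanished identically we would have $\varphi(c)=\varphi'(c)=0$; propagating these vanishing Cauchy data to the left across each interaction point (continuity of $\varphi$ and the jump $\varphi'(x_n+)-\varphi'(x_n-)=\alpha_n\varphi(x_n)$ preserve them) and invoking ODE uniqueness would give $\varphi\equiv0$ on $[a,c]$, and symmetrically on $[d,b]$, contradicting $\varphi\not\equiv0$. Therefore $\varphi|_{[c,d]}$ is a genuine eigenfunction and $E\in\sigma_p(H_\alpha^{\theta_0\gamma_0})$. The only real obstacle is the $\alpha$-independence of $\theta_0,\gamma_0$, and it is overcome precisely because $x_{n_0}$ lies strictly inside $[c,d]$, confining the variable coupling to that subinterval.
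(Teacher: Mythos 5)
Your argument is correct and, at its core, it is the same as the paper's: everything hinges on the fact that the boundary angles at $c$ and $d$ are determined by the restriction of an eigenfunction to $[a,c]$ and $[d,b]$, where the coupling $\alpha=\alpha_{n_0}$ is invisible and where the condition at $a$ (resp.\ $b$) pins the solution down up to a scalar. The packaging differs: you compare every eigenfunction to fixed reference solutions $u_a,u_b$, while the paper fixes one $\lambda_0\in A(E)$ with eigenfunction $\varphi$, reads $\theta_0,\gamma_0$ off $\varphi$, and proves that the eigenfunction $\psi$ of any other $H_{\alpha,M}$ is proportional to $\varphi$ on $(a,c]$ and on $[d,b)$. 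Two points you treat as black boxes are exactly where the paper does its work. First, in the limit point case a nontrivial solution lying left in $L^2$ at the real energy $E$ need not exist at all; it exists here only because $A(E)\neq\emptyset$ (an eigenfunction supplies it, and the lemma is vacuous otherwise), which is in effect why the paper uses an eigenfunction as its reference. Second, your claim that the lcc condition $[v,f]_a=0$ selects a one-dimensional space of solutions is precisely the nontrivial step: the paper establishes it through constancy of the Wronskian across the interaction points (Lemmas \ref{wcont} and \ref{wdop}) together with $[w,\cdot]_b\to 0$, which forces the Wronskian of two such solutions to vanish; in the lpc case it argues, as you do, that two independent $L^2$ solutions would contradict lpc. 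In your favor, you verify explicitly that $\varphi|_{[c,d]}\not\equiv 0$ by propagating zero Cauchy data across the interaction points, a step the paper leaves implicit.
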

\textit{Proof.}\\

If $\lambda_0\in A(E)$, then for some $\varphi\in D(H_{\lambda_0,M})$, $H_{\lambda_0,M}\varphi=E\varphi$.\\

Let us fix the points $\theta_0,\gamma_0\in[0,\pi)$ where
\begin{equation}\label{condiciones}
\begin{array}{c}
\varphi(c)cos\theta_0+\varphi'(c)\sin\theta_0=0
\\
\varphi(d)cos\gamma_0+\varphi'(d)\sin\gamma_0=0
\end{array}
\end{equation}

If $\alpha\in A(E)$ is such that  $\alpha=\lambda_0$, the assertion follows.\\

If $\alpha\in A(E)$ but $\lambda_0\not=\alpha$, then $H_{\alpha,M} \psi=E\psi$, for some $\psi\in D(H_{\alpha,M})$. Therefore, there exist $\theta,\gamma\in[0,\pi)$ which satisfy the boundary conditions  at $c$ and $d$ for $\psi$, similar to (\ref{condiciones}). If we prove that $\theta=\theta_0$ and $\gamma=\gamma_0$, then $H_\alpha^{\theta_0\gamma_0}\psi=E\psi$ and therefore $E\in\sigma(H_\alpha^{\theta_0\gamma_0})$.\\

Let us prove that $\gamma=\gamma_0$. The proof for $\theta$ is analogous.
\begin{itemize}
	\item[a)]Assume $\tau_{\alpha,M}$ is in the limit circle case at $b$.

The Wronkian satisfies $W_x(w,\varphi)=[w,\varphi]_x$ and $W_x(w,\psi)=[w,\psi]_x $ because $w$ is real. It is constant for $ x\in [d,b)$  since 
	$w,\psi$ and $ \varphi$ are solutions of $\tau_{\alpha,M} f=Ef$ in the interval $[d,b)$ 	because $x_{n_0}$ does not intersect $[d,b)$. By hypothesis, the functions $\varphi$ and $\psi$ satisfy the lcc condition at b. This implies
	$$0=[w,\psi]_b=\lim_{x\rightarrow b-}W_x(w,\psi)\qquad\mbox{and}\qquad 0=[w,\varphi]_b=\lim_{x\rightarrow b-}W_x(w,\varphi)$$
	Therefore $W_x(w,\psi)=W_x(w,\varphi)=0$  and then $W_x(\varphi,\psi)=0$. Thus $\varphi$ and $\psi$ are linearly dependent and $\varphi= K\psi$ for some constant $K\in\mathbb{R}$. Hence $\gamma=\gamma_0$. See Lemma 4.2 \cite{BSW}.
	\item[b)] Assume $\tau_{\alpha,M}$  is in the limit point case at $b$. If $\gamma_0\not=\gamma$ then $\varphi$ and $\psi$ are linearly independent in $[d,b)$, since if there exists a constant $K\in\mathbb{R}$ such that $\psi=K\varphi$ then $\gamma=\gamma_0$. Therefore every solution $f$ of $\tau_{\alpha,M} f=E f$ in $[d,b)$ can be written as $f=c_1\varphi+c_2\psi$. But, since $\varphi,\psi\in L^2(a,b)$, then $u\in L^2(a,b)$ and we get a contradiction to the limit point case.
\end{itemize}
\qed\\
An analogous argument was given in \cite{RR}. 

The following Theorem is a generalization of Corollary \ref{lemteoad}.\\

\begin{theorem}\label{teoad}
		Let $E\in\mathbb{R}$ fixed. Then for the set
	$$A(E):=\{\alpha\in\mathbb{R}:E\in\sigma_p(H_{\alpha,M})\}$$
		there are two possibilities:
		\begin{itemize}
			\item[a)] $A(E)$ has at most one element.
			\item[b)] $A(E)=\mathbb{R}.$
		\end{itemize}
	\end{theorem}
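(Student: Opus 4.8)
The plan is to reduce everything to the single-interaction Theorem \ref{lemteoad} by means of Lemma \ref{lema1del}. I would first fix the angles $\theta_0,\gamma_0\in[0,\pi)$ furnished by Lemma \ref{lema1del} and set $B(E):=\{\alpha\in\mathbb{R}: E\in\sigma_p(H_\alpha^{\theta_0\gamma_0})\}$. By construction the interval $[c,d]$ meets $M$ only at $x_{n_0}$, so $H_\alpha^{\theta_0\gamma_0}$ is exactly a one-interaction regular operator of the kind analyzed in Section \ref{1delta} (with $p=x_{n_0}$ and $J=[c,d]$), and Lemma \ref{lema1del} yields the inclusion $A(E)\subseteq B(E)$.

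I would then prove the dichotomy in contrapositive form: assume $A(E)$ contains two distinct elements and deduce $A(E)=\mathbb{R}$. Since $A(E)\subseteq B(E)$, the set $B(E)$ also has at least two elements, so Theorem \ref{lemteoad} applied to the one-interaction operator $H_\alpha^{\theta_0\gamma_0}$ forces $B(E)=\mathbb{R}$; Remark \ref{observa1} then tells us that the eigenfunction of $H_\alpha^{\theta_0\gamma_0}$ for $E$ vanishes at the interaction point $x_{n_0}$. Because $H_\alpha^{\theta_0\gamma_0}$ is regular with separated boundary conditions, its spectrum is simple, so this eigenfunction is unique up to a scalar and its vanishing at $x_{n_0}$ is unambiguous.

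Next I would transfer this vanishing to the full operator. Pick any $\lambda_0\in A(E)$ with eigenfunction $\varphi$, $H_{\lambda_0,M}\varphi=E\varphi$. The proof of Lemma \ref{lema1del} shows that $\varphi$ satisfies the boundary conditions $\theta_0,\gamma_0$ at $c,d$, hence $\varphi|_{[c,d]}$ is an eigenfunction of $H_{\lambda_0}^{\theta_0\gamma_0}$; by simplicity it is a scalar multiple of the eigenfunction of Remark \ref{observa1}, so $\varphi(x_{n_0})=0$. The jump condition at $x_{n_0}$ then reads $\varphi'(x_{n_0}+)-\varphi'(x_{n_0}-)=\lambda_0\varphi(x_{n_0})=0$, so $\varphi$ has no jump at $x_{n_0}$ at all. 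Consequently, for every $\beta\in\mathbb{R}$ the condition $\varphi'(x_{n_0}+)-\varphi'(x_{n_0}-)=\beta\varphi(x_{n_0})=0$ holds as well, while every other defining condition of $H_{\beta,M}$ — the equation $-\varphi''+V\varphi=E\varphi$ off $M$, the jumps at the points $x_n$ with $n\neq n_0$, square integrability, and the endpoint conditions — is untouched by changing $\alpha=\alpha_{n_0}$. Hence $\varphi\in D(H_{\beta,M})$ and $H_{\beta,M}\varphi=E\varphi$ for all $\beta$, so $A(E)=\mathbb{R}$.

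The step I expect to be most delicate is verifying that varying the single strength $\alpha_{n_0}$ leaves the limit-point/limit-circle conditions at $a$ and $b$ genuinely unchanged. Here I would use that $x_{n_0}$ is an interior point: the comparison solutions $v,w$ near $a$ and $b$ entering $D(H_{\beta,M})$ may be chosen on endpoint neighborhoods avoiding $x_{n_0}$, so they do not depend on $\alpha_{n_0}$, and the lcc/lpc classification at each endpoint is a tail property insensitive to a single interior interaction; thus the brackets $[v,\varphi]_a$ and $[w,\varphi]_b$ are identical for all $\beta$. A minor accompanying point is to justify that $\varphi|_{[c,d]}\not\equiv 0$, so that it really is the Remark \ref{observa1} eigenfunction: if $\varphi$ vanished identically on $[c,d]$ then its Cauchy data would vanish at $c$ and $d$, and propagating this through the ODE and the jump conditions would force $\varphi\equiv 0$ on all of $(a,b)$, contradicting that $\varphi$ is an eigenfunction.
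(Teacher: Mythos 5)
Your proof is correct and follows the same route as the paper: reduce to the one-interaction dichotomy (Theorem \ref{lemteoad}) through the fixed angles $\theta_0,\gamma_0$ supplied by Lemma \ref{lema1del}. In fact, your third paragraph---transferring case b) back to the full operator by showing $\varphi(x_{n_0})=0$, so that the jump condition trivializes and $\varphi\in D(H_{\beta,M})$ with $H_{\beta,M}\varphi=E\varphi$ for every $\beta$---makes explicit exactly the step that the paper's one-line conclusion (``the assertion follows by Lemma \ref{lema1del}'') leaves implicit, since the inclusion $A(E)\subseteq B(E)$ given by that lemma settles case a) but not by itself case b).
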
		

	\textit{Proof.} By Theorem  \ref{lemteoad} for $E$ fixed one of the following holds
	\begin{itemize}
		\item There exist at most one $\alpha$ such that $E\in\sigma_p(H_\alpha^{\theta_0,\gamma_0})$ or
		\item $E\in\sigma_p(H_\alpha^{\theta_0,\gamma_0})$ for every $\alpha\in\mathbb{R}$ 
	\end{itemize}
	The assertion follows by Lemma  \ref{lema1del}.
	\\\hfill\qed

\begin{remark}\label{observa2}
	Case $a)$ happens if the eigenvector $u$ associated to $E$ is such that $u(x_{n_0})\not=0$, otherwise case $b)$ holds.
\end{remark}


\section{Random Sturm-Liouville Operators with $\delta$-Point Interactions}\label{mdelta}
In this section we use the previously obtained results to study the random case.
First the probability space $\Omega$ where the sequences of coupling constants live is constructed and then our random operators are defined.\\

The space of real valued sequences $\{\omega_n\}_{n\in I}$, where $I\subseteq \mathbb{Z}$, will be denoted by $\mathbb{R}^I$. We introduce a measure in $\mathbb{R}^I$ in the following way. Let $\{p_n\}_{n\in I}$ be a sequence of continuous probability measures in $\mathbb{R}$ ($p_n(\{r\})=0$ for any $r\in\mathbb{R}$) and consider the product measure  $\mathbb{P}=\times_{n\in I}p_n$ defined on the product $\sigma-$\'algebra $\mathcal{F}$ of $\mathbb{R}^I$ generated by the cylinder sets, that is, by the sets of the form $\{\omega:\omega(i_1)\in A_1,\dots,\omega(i_n)\in A_n\}$ for $i_1,\dots,i_n\in I$, where $A_1,\dots, A_n$ are Borel sets in $\mathbb{R}$. In this way a measure space $\tilde\Omega=(\mathbb{R}^I,\mathcal{F},\mathbb{P})$ is constructed. We consider then the completion of this space (subsets of sets of measure zero are measurable) $\tilde\Omega$ which will be denoted by $\Omega$. See chapter 1, section 1 in \cite{PF}.
\\

Let  $-\infty\leq a<b\leq\infty$, $V\in L_{loc}^1(a,b)$ be a real valued function. Fix a discrete set $M:=\{x_n\}_{n\in I}\subset (a,b)$ where $I\subseteq\mathbb{Z}$ and let $\omega=\{\omega(n)\}_{n\in I}\in\Omega$.
Consider the formal differential expression
$$\tau_{\omega}:=-\frac{d^2}{dx^2}+V(x)+\sum_{n\in I}\omega(n)\delta(x-x_n)$$
The maximal operator $T_{\omega}$ corresponding to $\tau_{\omega}$ is defined as before by
$$T_{\omega} f=\tau f$$
$$D(T_{\omega})=\{f\in L^2(a,b):\,f,\,f'\mbox{ abs. cont in }(a,b)\backslash M,-f''+Vf\in L^2(a,b),$$
$$f(x_n+)=f(x_n-),\,f'(x_n+)-f'(x_n-)=\omega(n)f(x_n),\,\forall n\in I\}$$
Assume the limit point occurs at $a$ or that $\tau_{\omega}$ is regular at $a$ (See Definition \ref{regu}) and the same possibilities for $b$.\\

For $\theta,\gamma\in [0,\pi)$ fixed, let $H_\omega^{\theta,\gamma}$ be the selfadjoint restriction of $T_\omega$ defined as 
\begin{equation}\label{halfa2}
H_{\omega}^{\theta,\gamma} f=\tau f
\end{equation}
\[
\begin{array}{ccc}
D(H_{\omega}^{\theta,\gamma})&=&\left\{f\in D(T_{\omega}):
\begin{array}{cc}
f(a)cos\theta+f'(a)sen\theta=0 &\mbox{in case $\tau_\omega$ regular at $a$}\\ 
{f(b)cos\gamma+f'(b)sen\gamma=0}&\mbox{in case $\tau_\omega$ regular at $a$}
\end{array}
\right\}
\end{array}\]
Notice that the index $\theta$ or $\gamma$ are meaningless if $\tau_\omega$ is lpc at $a$ or $b$. \\
In what follows instead of $H_\omega^{\theta,\gamma}$ we shall write $H_\omega$.

\begin{remark}
	
	One example where $\tau_{\omega}$ is lpc at both ends for all $\omega\in\Omega$ was given in Theorem 1 \cite{CCS}. There it was required that $I=\mathbb{Z}$, $V$ bounded and $\inf\limits_{n\in\mathbb{Z}} |x_{n+1}-x_n|>0$. \\
	
\end{remark}

\begin{definition}\label{ae}
For any $E\in\mathbb{R}$, we define
\begin{equation}\label{adee}
A(E):=\{\omega\in \Omega:E\in\sigma_p(H_\omega)\}
\end{equation}
For any measurable $B\subseteq A(E)$ and any $n\in I$, define
\begin{equation}\label{qne}
Q_{n,E}:=\{\omega\in B |\exists \,u_\omega\in D(H_\omega) ,\, H_\omega u_\omega = Eu_\omega\, \mbox{ and}\, u_\omega(x_n)\not= 0\} \end{equation}
\end{definition}

\begin{lemma}\label{qmes}
$Q_{n,E}$ is measurable and $\mathbb{P}(Q_{n,E})=0$.
\end{lemma}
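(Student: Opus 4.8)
The plan is to collapse the problem onto the single coordinate $n$ and then feed it into the dichotomy of Theorem~\ref{teoad}. Fix $n\in I$ and split the coupling sequence as $\omega=(\omega(n),\omega')$, where $\omega'=\{\omega(m)\}_{m\neq n}$ ranges over $\Omega'=\mathbb R^{I\setminus\{n\}}$ with $\mathbb P'=\times_{m\neq n}p_m$, so that $\mathbb P=p_n\times\mathbb P'$. With $\omega'$ frozen, letting the $n$-th coupling vary is precisely the one-parameter family of Section~\ref{ndelta} with $n_0=n$; hence by Theorem~\ref{teoad} the slice
\[
A(E)^{\omega'}:=\{t\in\mathbb R:(t,\omega')\in A(E)\}
\]
is either all of $\mathbb R$ or has at most one element.

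The decisive observation is what membership in $Q_{n,E}$ forces on that slice. Suppose $(t,\omega')\in Q_{n,E}$ and write $\omega=(t,\omega')$. Then $E\in\sigma_p(H_\omega)$ with an eigenfunction $u$ satisfying $u(x_n)\neq0$, so Remark~\ref{observa2} places us in case~a): the slice $A(E)^{\omega'}$ has at most one element, and since $t$ lies in it, $A(E)^{\omega'}=\{t\}$. As $B\subseteq A(E)$, the slice $B^{\omega'}:=\{s:(s,\omega')\in B\}$ is contained in $\{t\}$, hence a single point at most; because $p_n$ is continuous, $p_n(B^{\omega'})=0$. Thus whenever the $\omega'$-slice of $Q_{n,E}$ is nonempty, the corresponding slice of $B$ is $p_n$-null.

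This lets me trap $Q_{n,E}$ inside a genuinely measurable null set. Put
\[
C:=\{(t,\omega')\in B:\;p_n(B^{\omega'})=0\}.
\]
By Fubini's theorem the map $\omega'\mapsto p_n(B^{\omega'})$ is $\mathbb P'$-measurable, so $C=B\cap(\mathbb R\times\{p_n(B^{\cdot})=0\})$ is measurable; moreover $C^{\omega'}$ equals $B^{\omega'}$ when $p_n(B^{\omega'})=0$ and is empty otherwise, whence $p_n(C^{\omega'})=0$ for every $\omega'$ and $\mathbb P(C)=\int_{\Omega'}p_n(C^{\omega'})\,d\mathbb P'(\omega')=0$. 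The previous paragraph gives $Q_{n,E}\subseteq C$. Since $\Omega$ is the completion of $\tilde\Omega$, a subset of the measurable null set $C$ is itself measurable with measure zero, so $Q_{n,E}$ is measurable and $\mathbb P(Q_{n,E})=0$.

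The one delicate point is measurability, and I expect it to be the real obstacle. Because $H_\omega$ need not depend measurably on $\omega$, neither $A(E)$ nor the set $\{\omega':A(E)^{\omega'}=\mathbb R\}$ is available as a measurable object, so one cannot integrate over them directly. The argument circumvents this by never treating $A(E)$ as measurable—only the qualitative alternative of Theorem~\ref{teoad}—and by carrying out all measure-theoretic bookkeeping on the given measurable set $B$, letting the completeness of $\Omega$ absorb $Q_{n,E}$ as a null subset of the measurable hull $C$.
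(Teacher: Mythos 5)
Your proof is correct and takes essentially the same approach as the paper: your set $C$ is exactly the paper's set $[f^{-1}(\{0\})\times\mathbb{R}]\cap B$, where $f(\tilde\omega)=p_n(B^{\tilde\omega})$ is the measure of the slice, and both arguments combine the single-coordinate dichotomy of Theorem~\ref{teoad} (via Remark~\ref{observa2}) with continuity of $p_n$, Fubini, and the completeness of $\mathbb{P}$ to absorb $Q_{n,E}$ as a null subset.
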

\textit{Proof.}
Let
\[\begin{array}{ccc}

\chi_B(\omega)&=&\left\{\begin{array}{cc}
1&\mbox{ if }\omega\in B\\
0&\mbox{ if }\omega\not\in B

\end{array}\right.
\end{array}\]

If $\omega\in Q_{n,E}$, then from the definition of $Q_{n,E}$ follows $\chi_B(\omega)=1$. \\

Let $f:\mathbb{R}^{I\backslash \{n\}}\rightarrow [0,\infty)$. $$f(\tilde\omega):=\int_\mathbb{R} \chi_B(\omega)dp_{n}(\omega(n))$$

where $\tilde\omega=\sum\limits_{k\in I\backslash \{n\}}\omega(k)e(k)$. Here $e(k)=(e_m)_{m\in I}$ are the canonical vectors with entries $e_m =0$ if $k\not=m$ and $e_k=1$. The measurability of $f$ follows from Fubini's Theorem. (See Theorem 7.8 \cite{WR})

If $\omega=\sum\limits_{k\in I}\omega(k)e(k)\in Q_{n,E}$ then $f(\tilde\omega)=0$, where $\tilde\omega=\sum\limits_{k\in I\backslash \{n\}}\omega(k)e(k)$,  since $p_n$ is continuous and from theorem \ref{teoad}.\\

Hence $Q_{n,E}\subseteq [f^{-1}(\{0\})\times \mathbb{R}]\cap B$.\\

Now, using  Fubini,
$$\int_{f^{-1}(\{0\})\times\mathbb{R}}\chi_B(\omega)d\mathbb{P}=\int_{f^{-1}(\{0\})}d\mathbb{P}(\tilde\omega)\int_\mathbb{R}\chi_B(\omega)dp_n(\omega(n))=\int_{f^{-1}(\{0\})}f(\tilde\omega)d\mathbb{P}(\tilde\omega)=0$$

Then,
$$\int_{[f^{-1}(\{0\})\times\mathbb{R}]\cap B}\chi_B(\omega)d\mathbb{P}=0$$
and since $\chi_B(\omega)=1$ in $B$, then $\mathbb{P}([f^{-1}(\{0\})\times\mathbb{R}]\cap B)=0$.\\

Since the measure $d\mathbb{P}$ is complete, then any subset of a measurable set of measure zero is measurable with measure zero. Therefore $Q_{n,E}$ is measurable.
\\\hfill\qed

\begin{theorem}\label{isornot}
Let $E\in \mathbb{R}$ fixed and $B$ any measurable subset of $A(E)$. Then one of the following options hold:
\begin{itemize}
	\item[$i)$] $\mathbb{P}(B)=0$ 
	\item[$ii)$] $A(E)=\Omega$
\end{itemize}
\end{theorem}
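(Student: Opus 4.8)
The plan is to combine the pointwise measure-zero estimate of Lemma \ref{qmes} over all interaction points by means of a countability argument, and then to show that the only way to avoid the measure-zero conclusion forces $A(E)=\Omega$.

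First I would fix a measurable $B\subseteq A(E)$ and form, for each $n\in I$, the set $Q_{n,E}$ of Definition \ref{ae} relative to this $B$. By Lemma \ref{qmes} each $Q_{n,E}$ is measurable with $\mathbb{P}(Q_{n,E})=0$, and since $I\subseteq\mathbb{Z}$ is countable, the union $N:=\bigcup_{n\in I}Q_{n,E}$ is measurable with $\mathbb{P}(N)=0$ by countable subadditivity. The dichotomy is then governed by whether or not $B$ is contained in $N$. If $B\subseteq N$, then $\mathbb{P}(B)\le\mathbb{P}(N)=0$, so option $i)$ holds and we are done.

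Otherwise there exists $\omega_0\in B\setminus N$. Because $\omega_0\in B\subseteq A(E)$, there is an eigenfunction $u\in D(H_{\omega_0})$ with $H_{\omega_0}u=Eu$; and because $\omega_0\notin Q_{n,E}$ for every $n$, the negation of the defining property of $Q_{n,E}$ says that \emph{every} eigenfunction of $H_{\omega_0}$ for $E$ vanishes at $x_n$, for all $n\in I$. In particular we obtain a single eigenfunction $u$ with $u(x_n)=0$ for every interaction point. The crucial step is to upgrade this into $A(E)=\Omega$. Here I would argue directly that $u$ is simultaneously an eigenfunction of $H_{\omega'}$ for every $\omega'\in\Omega$: away from $M$ the equation $-u''+Vu=Eu$ does not involve the coupling constants, so the $L^2$ and local absolute-continuity requirements in $D(T_{\omega'})$ are inherited unchanged, as are the continuity conditions $u(x_n+)=u(x_n-)$. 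The only $\omega$-dependent constraints are the jump relations $u'(x_n+)-u'(x_n-)=\omega'(n)u(x_n)$, and since $u(x_n)=0$ both sides vanish for every value of $\omega'(n)$; this is exactly the mechanism of Remark \ref{observa2} and Theorem \ref{teoad}. Finally the boundary (or limit-point) conditions at $a$ and $b$ in (\ref{halfa2}) are themselves independent of $\omega$, so they persist. Hence $u\in D(H_{\omega'})$ and $H_{\omega'}u=Eu$ for all $\omega'$, i.e. $E\in\sigma_p(H_{\omega'})$ for every $\omega'\in\Omega$, which is precisely option $ii)$.

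The main obstacle I anticipate is this last verification: one must check that membership in the domain of the \emph{selfadjoint restriction} is genuinely preserved when the coupling constants are changed, and in particular that nothing goes wrong at a limit-point endpoint, where no explicit boundary condition is imposed but square-integrability near the endpoint must still hold. Since $u\in L^2(a,b)$ to begin with and its endpoint behaviour is dictated by the $\omega$-independent differential equation, this causes no real difficulty, but it is the place where the precise structure of $D(H_\omega)$ must be used carefully rather than appealed to informally.
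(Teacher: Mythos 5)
Your proof is correct and follows essentially the same route as the paper: both rest on Lemma \ref{qmes}, the countable union $\bigcup_{n\in I}Q_{n,E}$, and the key mechanism that an eigenfunction vanishing at every $x_n$ satisfies the jump conditions trivially and hence remains an eigenfunction when the coupling constants are changed. The only difference is organizational --- the paper argues contrapositively (if $A(E)\neq\Omega$, pick $\omega_0$ with $E\notin\sigma_p(H_{\omega_0})$ and deduce $B\subseteq\bigcup_{n\in I}Q_{n,E}$), whereas you split on whether $B\subseteq\bigcup_{n\in I}Q_{n,E}$ and in the second case transfer the eigenfunction to all $\omega'\in\Omega$ at once; you also spell out the domain verification (including the limit-point case) that the paper leaves implicit.
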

\textit{Proof.} It will be enough to proof that if $ii)$ doesn't hold then $i)$ must hold.\\

Assume then that there exist $\omega_0\in \Omega$ such that $E$ is not eigenvalue of $H_{\omega_0}$. If $E$ is not eigenvalue of $H_\omega$, $\forall\omega\in\Omega$, then $\mathbb P(B)=0$ and the result follows. \\

Suppose now $\omega\in B$, then $E\in\sigma_p(H_\omega)$, i.e. there exist $u_\omega\in D(H_\omega)$ such that  $H_\omega u_\omega=Eu_\omega$. Then $\omega\in Q_{n,E}$, for some $n\in I$. This follows because if  $u_\omega(x_n)=0$ $\forall n\in I$,  then from the definition of $H_\omega$, $E$ must be an eigenvalue of $H_{\omega_0}$. Therefore
$$B\subset \bigcup_{n\in I}Q_{n,E}$$

Using lemma \ref{qmes}, then $\mathbb{P}(\bigcup\limits_{n\in I}Q_n)=0$, therefore the result follows.\\
\hfill\qed\\

For the next Corollary we denote by $H$ the operator $H_\omega$ defined in ( \ref{halfa2} ) with $\omega(n)=0$, $\forall n\in I$. This is just the selfadjoint operator generated by the differential expression $\tau$ in classical Sturm-Liouville theory without point interactions.

\begin{corolary}[cf. Remarks \ref{observa1}, \ref{observa2}] \label{alterna}
	\begin{itemize}
	\hfill
\item[a)]	If $E\not\in \sigma_p(H)$ then $\mathbb{P}(B)=0$  for any measurable subset $B$ of $\omega\in \Omega$ for which $E\in\sigma_p (H_\omega)$.
	
\item[ b)]	If $E\in \sigma_p(H)$ with $Hu=Eu$, then $A(E)=\Omega$ if and only if $u(x_n)=0$, $\forall n\in I$.
\end{itemize}
\end{corolary}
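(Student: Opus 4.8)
The plan is to read off both statements directly from Theorem \ref{isornot}, using the single observation that $H$ is nothing but $H_\omega$ evaluated at the zero sequence $\omega\equiv 0$, which is a genuine point of $\Omega=\mathbb{R}^I$. With this identification, the hypotheses of the two parts translate into statements about whether $\omega\equiv 0$ lies in $A(E)$ and about the value of the eigenfunction at the interaction points, so the corollary becomes a transcription of the dichotomy already proved.

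For part a) I would argue by the alternative of Theorem \ref{isornot}. Since $E\notin\sigma_p(H)$ and $H=H_{\omega\equiv 0}$, the zero sequence does not belong to $A(E)$, and therefore $A(E)\neq\Omega$. Theorem \ref{isornot} then rules out option $ii)$, leaving $\mathbb{P}(B)=0$ for every measurable $B\subseteq A(E)$, which is exactly the claim.

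For part b) I would treat the two implications separately. For the ``if'' direction, assume $u(x_n)=0$ for all $n\in I$ and check that the fixed eigenfunction $u$ of $H$ lies in $D(H_\omega)$ for every $\omega$: it is continuous, so $u(x_n+)=u(x_n-)$; its derivative has no jump, so $u'(x_n+)-u'(x_n-)=0=\omega(n)u(x_n)$; and the endpoint conditions are inherited from $H$, as they depend only on the behavior of $u$ near $a$ and $b$, which the interactions do not affect. Hence $H_\omega u=\tau u=Eu$ for all $\omega$, so $A(E)=\Omega$. For the ``only if'' direction I would argue by contraposition through the one-parameter subfamily: suppose $u(x_{n_0})\neq 0$ for some $n_0$. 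Freezing $\omega(n)=0$ for $n\neq n_0$ and letting $\alpha=\omega(n_0)$ vary reproduces exactly the family $H_{\alpha,M}$ of Section \ref{ndelta}, with $H=H_{0,M}$ and eigenfunction $u$. Since $u(x_{n_0})\neq 0$, Remark \ref{observa2} places us in case a) of Theorem \ref{teoad}, so $\{\alpha\in\mathbb{R}:E\in\sigma_p(H_{\alpha,M})\}$ has at most one element; as it contains $0$, it equals $\{0\}$. Consequently any sequence with $\omega(n_0)\neq 0$ and all other entries zero fails to lie in $A(E)$, whence $A(E)\neq\Omega$.

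The only delicate point, and the step I expect to require the most care, is the bookkeeping in the ``only if'' direction: one must match the single-coupling subfamily to the precise hypotheses of Theorem \ref{teoad} and Remark \ref{observa2}, and ensure the eigenfunction appearing there is the same $u$, which relies on simplicity of the eigenvalue (standard for these operators). Everything else is an immediate application of the dichotomy, so the proof is short once this identification is made explicit.
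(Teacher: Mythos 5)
Your proof is correct, and both part a) and the ``if'' direction of part b) coincide with the paper's own argument (the zero sequence is a point of $\Omega$, and a function vanishing at every $x_n$ satisfies all jump conditions trivially, so it lies in $D(H_\omega)$ for every $\omega$). Where you genuinely diverge is the ``only if'' direction of b). The paper stays probabilistic: since $\mathbb{P}(\bigcup_{n\in I}Q_{n,E})=0$ by Lemma \ref{qmes} while $A(E)=\Omega$ has full measure, it picks $\tilde\omega\in A(E)\setminus\bigcup_{n\in I}Q_{n,E}$, observes that the corresponding eigenfunction $u_{\tilde\omega}$ vanishes at every $x_n$, hence has no derivative jumps and is an eigenfunction of $H$ itself, and finally transfers the vanishing to $u$ via simplicity of the eigenvalues of $H$ (Theorem 8.29(d) of \cite{JW}). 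You instead argue deterministically by contraposition: if $u(x_{n_0})\neq 0$, freeze all other couplings at zero and invoke the one-parameter dichotomy of Theorem \ref{teoad} together with Remark \ref{observa2} to conclude that the one-parameter coupling set equals $\{0\}$, so any sequence supported only at $n_0$ with nonzero entry lies outside $A(E)$, whence $A(E)\neq\Omega$. This is sound, with one caveat: Remark \ref{observa2} is stated in the paper without proof, and the direction you need (an eigenfunction of \emph{some} $\alpha\in A(E)$ nonvanishing at $x_{n_0}$ forces case a)) should, in a careful write-up, be unfolded into Lemma \ref{lema1del} combined with Theorem \ref{polo}, Corollary \ref{corocompa} and Theorem \ref{lemteoad}, rather than cited as a black box. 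Note also that the ``delicate point'' you flag --- simplicity of $E$ as an eigenvalue of $H$ --- is not actually needed on your route: the only simplicity used is that of the regular problem on $[c,d]$ with separated boundary conditions, which is automatic and already built into the Section \ref{1delta} machinery; it is the paper's route that requires simplicity of $H$'s eigenvalues on all of $(a,b)$. As for what each approach buys: the paper's argument recycles the probabilistic lemma it has already proved and, as a byproduct, exhibits a single $\omega$-independent eigenfunction whenever $A(E)=\Omega$; yours is measure-free, produces an explicit deterministic witness $\omega\notin A(E)$, and makes the ``cf.\ Remarks \ref{observa1}, \ref{observa2}'' tag in the statement literal.
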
	
\begin{proof}
	\hfill
	\begin{itemize}
		\item[a)] 
	If $E\not\in\sigma_p(H)$, then $\omega=(\dots,0,0,0,\dots)\not\in A(E)$. Therefore $A(E)\not=\Omega$ and the assertion follows from Theorem \ref{isornot}. 
	\item[b)] 		Suppose $E\in \sigma_p(H)$ with $Hu=Eu$.
	\begin{itemize}

		\item[$\Leftarrow)$] If  $u(x_n)=0$ $\forall n\in I$,  then from the definition of $H_\omega$, $E$ must be an eigenvalue of $H_{\omega}$ with eigenvector $u$, $\forall\omega\in\Omega$.
		\item[$\Rightarrow)$] From Lemma \ref{qmes}, $\mathbb{P}(\bigcup_{n\in I}Q_{n,E})=0$. Then $A(E)=\Omega\not\subseteq \bigcup_{n\in I}Q_{n,E}$. \\
		
		Take $\tilde{\omega}\in A(E)\backslash\bigcup\limits_{n\in I}Q_{n,E}$. There exists $u_{\tilde{\omega}}\in D(H_{\tilde{\omega}})$ such that
		$$(\tau_{\tilde{\omega}}-E)u_{\tilde{\omega}}=0 \qquad\mbox{and}\qquad u_{\tilde{\omega}}(x_n)=0,\,\forall n\in I$$
		Therefore  $u'_{\tilde{\omega}}(x_n+)-u'_{\tilde{\omega}}(x_n-)=\tilde\omega(n)u_{\tilde{\omega}}(x_n)=0$, $\forall n\in I$.	Hence $u'_{\tilde{\omega}}$ is continuous in $(a,b)$ and $(H-E)u_{\tilde{\omega}}=0$.\\
		Since all eigenvalues of $H$ are simple, see Theorem 8.29 (d)  \cite{JW}, then $u(x_n)=C u_{\tilde\omega}(x_n)=0$, $\forall n\in I$.
	\end{itemize}
\end{itemize}
\end{proof}

Then unless $E$ is an eigenvalue of $H$ and the point interactions are placed at the roots of eigenfuctions, we will have a ``small" set of operators $H_\omega$ will share the same eigenvalue $E$.\\

As another consequence of Theorem \ref{isornot} we get the following Corollary
\begin{corolary}
	Let $\{E_i\}_{i=1}^\infty$ be a sequence of real numbers and $B_i$ measurable subsets of $A(E_i)$. Assume there is no point $E\in\mathbb{R}$ which is eigenvalue of $H_\omega$ for all $\omega\in \Omega$, then 
	$$\mathbb{P}(\{\omega\in\bigcup_{i=1}^\infty B_i:\{E_i\}_{i=1}^\infty\cap\sigma_p(H_\omega)\not=\emptyset    \})=0$$
\end{corolary}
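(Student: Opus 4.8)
The plan is to reduce everything to Theorem \ref{isornot} applied term by term, followed by countable subadditivity of $\mathbb{P}$. First I would record the consequence of the standing hypothesis: since no $E\in\mathbb{R}$ is an eigenvalue of $H_\omega$ for every $\omega\in\Omega$, we have $A(E)\neq\Omega$ for all $E$, and in particular $A(E_i)\neq\Omega$ for each index $i$.

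Next, for each fixed $i$ I would apply Theorem \ref{isornot} with $E=E_i$ and $B=B_i$. Because $B_i$ is by assumption a measurable subset of $A(E_i)$ and option $ii)$ (namely $A(E_i)=\Omega$) is ruled out by the previous paragraph, the theorem forces option $i)$, that is $\mathbb{P}(B_i)=0$. This is really the only place where the hypotheses of the corollary enter, and it is the crux of the argument; everything else is bookkeeping.

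The remaining step is to identify the target set. Writing $S$ for $\{\omega\in\bigcup_{i=1}^\infty B_i:\{E_i\}_{i=1}^\infty\cap\sigma_p(H_\omega)\neq\emptyset\}$, I would observe that the defining condition is automatic: if $\omega\in\bigcup_i B_i$ then $\omega\in B_j$ for some $j$, and since $B_j\subseteq A(E_j)$ this means $E_j\in\sigma_p(H_\omega)$, so $E_j\in\{E_i\}_{i=1}^\infty\cap\sigma_p(H_\omega)$ and that intersection is nonempty. Hence $S=\bigcup_{i=1}^\infty B_i$, which is measurable as a countable union of measurable sets.

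I would then conclude by countable subadditivity,
$$\mathbb{P}(S)=\mathbb{P}\left(\bigcup_{i=1}^\infty B_i\right)\leq\sum_{i=1}^\infty\mathbb{P}(B_i)=0.$$
There is no genuine obstacle here: the substantive content is entirely Theorem \ref{isornot}, and the only point requiring a moment's care is the elementary observation that $S$ collapses to the union $\bigcup_i B_i$, so that both its measurability and its null measure follow at once.
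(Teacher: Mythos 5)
Your proof is correct and takes essentially the same route as the paper: Theorem \ref{isornot} applied to each pair $(E_i,B_i)$ to get $\mathbb{P}(B_i)=0$ (option $ii)$ being excluded by the hypothesis), followed by countable subadditivity. In fact your version is slightly more careful than the paper's own proof, which applies $\mathbb{P}$ directly to the sets $\{\omega\in\Omega: E_i\in\sigma_p(H_\omega)\}=A(E_i)$ even though these are not known to be measurable (which is precisely why the statement introduces the measurable subsets $B_i$); your observation that the target set collapses to $\bigcup_{i=1}^\infty B_i$ keeps the whole argument inside the measurable sets and removes that blemish.
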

\textit{Proof.} By additivity of $\mathbb{P}$ and Theorem \ref{isornot}, we have
$$\mathbb{P}(\{\omega\in\Omega:\{E_i\}_{i=1}^\infty\cap\sigma_p(H_\omega)\not=\emptyset    \})=\mathbb{P}(\{\omega\in\Omega: \bigcup_{i=1}^\infty\left[\{E_i\}\cap\sigma_p(H_\omega)\right]\not=\emptyset)\}=$$
$$\mathbb{P}(\bigcup_{i=1}^\infty\{\omega\in\Omega: E_i\in\sigma_p(H_\omega))\}\leq\sum_{i=1}^\infty\mathbb{P}(\{\omega\in\Omega: E_i\in\sigma_p(H_\omega)\})=0$$
\qed

\subsection{Oscillation of Solutions}\label{sub1}
We shall use results about the oscillation of solutions of second order differential expressions. The location of zeros of eigenfunctions together with knowledge about the positions of the point interactions, will help us to understand when option $b)$ in Theorem \ref{isornot} happens.\\

In this subsection $\tau$ is as in equation ( \ref{tau} ) and $A(E)$ is defined as in Definition \ref{ae} (  \ref{adee} ) that is as the set of $\omega\in\Omega$ such that $H_\omega$ share the common eigenvalue $E$.

\begin{definition}[See Section XI.6 in \cite{HP}]
	The equation  $$(\tau-E)f=0 $$
	is said to be nonoscillatory on an interval $J$ if every solution has at most a finite number of zeros on $J$.\\
	
	If $t=b$ is a (possibly infinite) endpoint of $J$ which does not belong to $J$, then the equation is said to be nonoscillatory at $t=b$ if every solution has a finite number of zeros in $J$ or if the zeros do not accumulate at $b$.
\end{definition}

\begin{lemma}\label{lem2}
	If $A(E)=\Omega$, then there exists a solution $u$ of $(\tau-E)f=0$ such that $u(x_n)=0$, $\forall n\in I$.
\end{lemma}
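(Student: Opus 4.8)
The plan is to push the problem down to the classical, interaction-free operator $H$ and then quote the equivalence already established in Corollary \ref{alterna}. First I would observe that the hypothesis $A(E)=\Omega$ forces, in particular, the zero sequence $\omega_0=(\dots,0,0,0,\dots)$ to lie in $A(E)$. By construction $H_{\omega_0}=H$, so this says exactly that $E\in\sigma_p(H)$; let $u$ denote an eigenfunction, $Hu=Eu$. Because $H$ is generated by $\tau$ with all coupling constants equal to zero, $u$ is automatically a solution of $(\tau-E)f=0$ in the ordinary sense: each jump condition $u'(x_n+)-u'(x_n-)=\omega_0(n)u(x_n)=0$ is void, so $u$ and $u'$ are absolutely continuous across every $x_n$ and $u$ solves the equation on all of $(a,b)$.

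With $E\in\sigma_p(H)$ and $Hu=Eu$ in hand, the second step is to apply part b) of Corollary \ref{alterna}. Its forward implication states precisely that, under the standing hypothesis $A(E)=\Omega$, the eigenfunction must satisfy $u(x_n)=0$ for every $n\in I$. Combining this with the previous paragraph, the function $u$ is a solution of $(\tau-E)f=0$ that vanishes at each interaction point, which is the asserted conclusion.

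I do not expect a substantive obstacle, since the analytic content is already carried by Corollary \ref{alterna} (and ultimately by Lemma \ref{qmes}). The only point meriting attention is the passage between an eigenfunction of $H_\omega$ and a genuine solution of the interaction-free equation $(\tau-E)f=0$; this is exactly where the vanishing $u(x_n)=0$ is used to annihilate the jumps. Should a self-contained argument be preferred, I would instead invoke Lemma \ref{qmes} directly: since $\mathbb{P}(\bigcup_{n\in I}Q_{n,E})=0$ while $\mathbb{P}(\Omega)=1$, one can choose $\tilde\omega\in A(E)\setminus\bigcup_{n\in I}Q_{n,E}$; the corresponding eigenfunction $u_{\tilde\omega}$ then satisfies $u_{\tilde\omega}(x_n)=0$ for all $n$, and again the vanishing jumps make $u_{\tilde\omega}$ a solution of $(\tau-E)f=0$, furnishing the required $u$.
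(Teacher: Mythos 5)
Your proposal is correct and follows the same route as the paper: note that $A(E)=\Omega$ puts the zero sequence in $A(E)$, hence $E\in\sigma_p(H)$ with eigenfunction $u$ solving $(\tau-E)f=0$, and then apply the forward implication of Corollary \ref{alterna}~(b) to conclude $u(x_n)=0$ for all $n\in I$. Your alternative ``self-contained'' route via Lemma \ref{qmes} is simply an unfolding of the proof of that corollary, so it is the same argument in substance.
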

\begin{proof}
	If $A(E)=\Omega$, then there exists $u$ such that  $H u= Eu$, where  $H$ is the operator $H_\omega$ with $\omega(n)=0$, $\forall n\in I$. From Corollary \ref{alterna} (b) the assertion follows.
\end{proof}

\begin{theorem}\label{lyap}
	Let $V$ be the potential appearing in the expression ( \ref{tau} ). Assume  $|V(x)|\leq K$ for all $x\in (a,b)$. Let $J$ be an interval such that   $$|J|\leq\frac{2}{\sqrt {K+|E|}}$$ 
	where $|J|$ denotes the length of the interval.  Assume $J\cap M$ has at least two elements.
	Then $\mathbb{P}(B)=0$ for any measurable subset $B$ of $A(E)$.
\end{theorem}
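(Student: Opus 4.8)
The plan is to combine the dichotomy of Theorem~\ref{isornot} with a quantitative zero-separation (Lyapunov) estimate for solutions of $(\tau-E)f=0$. By Theorem~\ref{isornot}, for any measurable $B\subseteq A(E)$ we have either $\mathbb{P}(B)=0$ or $A(E)=\Omega$; hence it suffices to rule out the second alternative, i.e.\ to show $A(E)\neq\Omega$. I would argue by contradiction: assume $A(E)=\Omega$. Then Lemma~\ref{lem2} provides a nontrivial solution $u$ of $(\tau-E)f=0$ with $u(x_n)=0$ for every $n\in I$. Note that this $u$ is a genuine classical solution of $-u''+Vu=Eu$ on all of $(a,b)$, since the operator appearing in Lemma~\ref{lem2} carries no point interactions and its eigenfunctions have $u'$ absolutely continuous across every $x_n$; in particular, the zero-separation estimate below applies to it directly. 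Because $J\cap M$ contains at least two points, $u$ vanishes at two distinct points $x_{n_1}<x_{n_2}$ of $J$, with $x_{n_2}-x_{n_1}\leq|J|\leq 2/\sqrt{K+|E|}$.

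The heart of the proof is then to show that this is impossible. Rewriting the equation as $u''+(E-V)u=0$ and setting $q:=E-V$, the boundedness hypothesis gives $|q(x)|\leq|E|+|V(x)|\leq K+|E|=:Q$ on $(a,b)$. Since a nontrivial solution of a second-order linear equation has isolated zeros, I can pick two consecutive zeros $c<d$ of $u$ inside $[x_{n_1},x_{n_2}]$, so that $d-c\leq x_{n_2}-x_{n_1}\leq|J|$. Applying the classical Lyapunov inequality to $u$ on $[c,d]$ yields $\tfrac{4}{d-c}<\int_c^d q_+(t)\,dt\leq Q\,(d-c)$, whence $(d-c)^2>4/Q$, that is $d-c>2/\sqrt{K+|E|}$. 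This contradicts $d-c\leq|J|\leq 2/\sqrt{K+|E|}$; observe that the strictness of Lyapunov's inequality is exactly what lets me handle the boundary case in which $|J|$ equals the critical length $2/\sqrt{K+|E|}$.

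Having reached a contradiction, I conclude that no such $u$ exists, so $A(E)\neq\Omega$, and Theorem~\ref{isornot} then gives $\mathbb{P}(B)=0$ for every measurable $B\subseteq A(E)$, as claimed. The only substantive step is the zero-separation estimate: the whole statement reduces to the assertion that a nontrivial solution of $u''+(E-V)u=0$ with $|E-V|\leq K+|E|$ cannot vanish at two points separated by at most $2/\sqrt{K+|E|}$. I would invoke Lyapunov's inequality for this (which is presumably why the result is named accordingly and why the constant $2$ — rather than the $\pi$ coming from a Sturm comparison with $v''+Qv=0$ — appears); alternatively, to keep the argument self-contained, one derives the separation bound by estimating $\max_{[c,d]}|u|$ against $\int_c^d|u''|=\int_c^d|q|\,|u|$. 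Making that estimate airtight is the main obstacle, since the reduction from Theorem~\ref{isornot} and Lemma~\ref{lem2} is otherwise immediate.
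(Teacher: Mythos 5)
Your proof is correct and follows essentially the same route as the paper: reduce via the dichotomy of Theorem~\ref{isornot} and Lemma~\ref{lem2} to a nontrivial solution of $(\tau-E)f=0$ vanishing at two points of $J$, then contradict this with a Lyapunov-type zero-separation bound. The only difference is one of presentation: the paper cites Lyapunov's disconjugacy theorem as a black box (Theorem 3.9 of \cite{DHU}, Corollary 5.1 of \cite{HP}), whereas you unpack it into the explicit chain $\frac{4}{d-c}<\int_c^d q_+\leq (K+|E|)(d-c)$, which also makes transparent why the borderline case $|J|=2/\sqrt{K+|E|}$ is covered.
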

\begin{proof}
	Suppose there exists a measurable subset $B$ of $A(E)$ such that $\mathbb{P}(B)>0$, then from Theorem \ref{isornot}, $A(E)=\Omega$.\\

 By Lemma \ref{lem2}, there exists a solution $u$ of $(\tau-E)f=0$ such that $u(x_n)=0$, $\forall n\in I$.
	 Using a theorem due to Lyapunov, see Theorem 3.9 of \cite{DHU} and Corollary 5.1 of \cite{HP}, the interval $J$ is disconjugate , i.e.  there is at most one zero of any solution of $(\tau-E)f=0$ in the interval $J$, since $u$ is solution this is a contradiction, hence $\mathbb{P}(B)=0$ for any measurable subset $B$ of $A(E)$.
\end{proof}

In the last Theorem observe that the larger  $|E|$ is, the smaller $|J|$ has to be. This corresponds to the fact that the solutions oscillate faster if the energy is high.

\begin{theorem}\label{non}
	Suppose $(\tau-E)f=0$ is nonoscillatory in $(a,b)$ and the set of interactions $M$ is a countable set. Then $\mathbb{P}(B)=0$ for any measurable subset $B$ of $A(E)$.
\end{theorem}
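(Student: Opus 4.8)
The plan is to use the dichotomy of Theorem~\ref{isornot} and eliminate its second alternative by a zero-counting argument. Suppose, toward a contradiction, that some measurable $B\subseteq A(E)$ satisfies $\mathbb{P}(B)>0$. Then option $i)$ of Theorem~\ref{isornot} fails, so option $ii)$ must hold and $A(E)=\Omega$. This is precisely the hypothesis of Lemma~\ref{lem2}, which yields a nontrivial solution $u$ of $(\tau-E)f=0$ with $u(x_n)=0$ for every $n\in I$.

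The next step is to read off a contradiction from the geometry of the zero set of $u$. Since $M=\{x_n\}_{n\in I}$ is an infinite (countable) set that is discrete in $(a,b)$ and accumulates, if at all, only at the endpoints $a$ or $b$, the function $u$ has infinitely many zeros, one at each $x_n$. As $u\not\equiv 0$ solves a second order linear equation, it cannot vanish together with $u'$ at any interior point, so its zeros are isolated in $(a,b)$. An infinite family of points lying in the discrete set $M$ can therefore accumulate only at $a$ or at $b$ (by Bolzano--Weierstrass when the interval is bounded, and in the sense $x_n\to a$ or $x_n\to b$ when an endpoint is infinite).

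This conclusion is incompatible with the standing assumption. Because $(\tau-E)f=0$ is nonoscillatory in $(a,b)$, the zeros of any solution do not accumulate at the endpoints $a$ or $b$; equivalently, every solution has only finitely many zeros away from them. The solution $u$ produced above has infinitely many zeros accumulating at an endpoint, which is the desired contradiction. Hence no measurable $B\subseteq A(E)$ can carry positive probability, and $\mathbb{P}(B)=0$ for every such $B$.

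The only delicate point is the accumulation claim in the second paragraph: one must be sure the infinitely many zeros $\{x_n\}$ are forced to pile up at an endpoint rather than at an interior point. This is guaranteed precisely by the two hypotheses in play, namely that $M$ is discrete with accumulation confined to $a$ or $b$ and that $u$ is a nontrivial solution, hence has isolated interior zeros. Once this is secured the contradiction with nonoscillation is immediate, and the remainder is a direct appeal to Theorem~\ref{isornot} and Lemma~\ref{lem2}.
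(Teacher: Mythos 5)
Your proposal is correct and follows essentially the same route as the paper: contradiction via Theorem \ref{isornot} to force $A(E)=\Omega$, then Lemma \ref{lem2} to produce a nontrivial solution vanishing at every $x_n$, then a contradiction with nonoscillation since the countably infinite set $M$ gives the solution infinitely many zeros. Your extra discussion of where the zeros accumulate is harmless but not needed, since the paper's definition of nonoscillatory on $(a,b)$ already caps every solution at finitely many zeros.
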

\begin{proof}
		Suppose there exists a measurable subset $B$ of $A(E)$ such that $\mathbb{P}(B)>0$, then from Theorem \ref{isornot}, $A(E)=\Omega$.\\
	
	By Lemma \ref{lem2}, there exists a solution $u$ of $(\tau-E)f=0$ such that $u(x_n)=0$, $\forall n\in I$.
	The equation $(\tau-E)f=0$ is nonoscillatory i.e. any solution in the interval $(a,b)$ has at most a finite number of zeros, . Since $u$ is solution this is a contradiction, hence $\mathbb{P}(B)=0$ for any measurable subset $B$ of $A(E)$.
\end{proof}

There are several conditions in the literature which allow us to conclude that our problem is nonoscillatory. Applying a Theorem of Hille, Theorem 3.1  \cite{DHU}, we get the following result 

\begin{theorem}
	If $V$ is continuous in $[a,\infty)$, $V(x)\leq E$, $\int_a^\infty (E-V(x) )dx <\infty $ and
	$$\limsup_{x\rightarrow\infty}x\int_x^\infty (E-V(t))dt<\frac{1}{4}$$
	then $(\tau-E)f=0$ is nonoscillatory at $[a,\infty)$.
\end{theorem}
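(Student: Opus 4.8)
The plan is to put the equation into standard second-order form with no first-derivative term and then invoke the Hille nonoscillation criterion (Theorem 3.1 in \cite{DHU}) directly. Unwinding $(\tau-E)f=0$ with $\tau=-\frac{d^2}{dx^2}+V$ gives $-f''+Vf-Ef=0$, that is,
$$f''+(E-V(x))f=0.$$
Setting $q(x):=E-V(x)$, the problem is already in the normal form $f''+q(x)f=0$ to which Hille's test applies, so no Liouville-type change of variables is needed.

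The second step is to check that the hypotheses of the cited theorem match those imposed here. Since $V$ is continuous on $[a,\infty)$, so is $q$; since $V(x)\leq E$, we have $q(x)\geq 0$; and by assumption $\int_a^\infty q(x)\,dx=\int_a^\infty(E-V(x))\,dx<\infty$, so the tail $Q(x):=\int_x^\infty q(t)\,dt=\int_x^\infty(E-V(t))\,dt$ is well defined and finite for every $x$. The remaining hypothesis is precisely the quantitative bound
$$\limsup_{x\rightarrow\infty}xQ(x)=\limsup_{x\rightarrow\infty}x\int_x^\infty(E-V(t))\,dt<\frac14.$$

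With these conditions verified, Hille's theorem yields that $f''+q(x)f=0$ is nonoscillatory at infinity, i.e. every solution has only finitely many zeros near $\infty$, equivalently the zeros do not accumulate at $b=\infty$ in the sense of the definition adopted above. Since this is the same equation as $(\tau-E)f=0$, we conclude that $(\tau-E)f=0$ is nonoscillatory at $[a,\infty)$, as claimed.

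I do not expect a genuine obstacle here. The only points requiring care are confirming that the sign condition $q=E-V\geq 0$ together with the integrability assumption place the problem squarely inside the hypotheses of the cited Hille criterion, and noting that the constant $\tfrac14$ in the statement is exactly the sharp threshold appearing in that criterion. Because the equation already lacks a first-derivative term, the reduction is immediate and the whole argument is essentially a verification that our data satisfy the cited theorem.
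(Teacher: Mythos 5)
Your proposal is correct and matches the paper's approach exactly: the paper proves this theorem simply by invoking Hille's nonoscillation criterion (Theorem 3.1 in \cite{DHU}), and your write-up just spells out the verification that $q(x)=E-V(x)$ is continuous, nonnegative, has finite integral, and satisfies the $\limsup$ bound below $\tfrac14$, which is precisely what the citation requires. No gap; the argument is the intended one.
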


Finer estimates on the number of zeros can be used too, as the following result shows.

\begin{theorem}
	Let $V$ continuous in $[0, T]$ and $|V(x)|\leq  K$. If the number of points in $M\cap [0,T]$ is greater or equal to
	$$\frac{T\sqrt{|E|+K}}{2}+1$$
	then, $\mathbb{P}(B)=0$ for any measurable subset $B$ of $A(E)$.
\end{theorem}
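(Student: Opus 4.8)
The plan is to deduce this from Theorem \ref{lyap} by a counting argument: the hypothesis on the number of interaction points in $[0,T]$ is exactly what forces two of them to lie in a subinterval short enough for the Lyapunov disconjugacy bound to apply. So I would argue by contradiction, just as in Theorems \ref{lyap} and \ref{non}. First I would assume $\mathbb{P}(B)>0$ for some measurable $B\subseteq A(E)$; by Theorem \ref{isornot} this gives $A(E)=\Omega$, and then Lemma \ref{lem2} produces a solution $u$ of $(\tau-E)f=0$ that vanishes at every $x_n$, in particular at every point of $M\cap[0,T]$.

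The heart of the argument is a pigeonhole step. Write $N:=\#(M\cap[0,T])$ and order these points as $y_1<y_2<\cdots<y_N$ in $[0,T]$. Since the consecutive gaps satisfy $\sum_{i=1}^{N-1}(y_{i+1}-y_i)=y_N-y_1\leq T$, the smallest gap obeys $\min_{i}(y_{i+1}-y_i)\leq \frac{T}{N-1}$. The hypothesis $N\geq \frac{T\sqrt{|E|+K}}{2}+1$ is equivalent to $N-1\geq \frac{T\sqrt{|E|+K}}{2}$, hence $\frac{T}{N-1}\leq \frac{2}{\sqrt{|E|+K}}$. Thus there is an index $j$ with $y_{j+1}-y_j\leq \frac{2}{\sqrt{|E|+K}}$.

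Taking $J:=[y_j,y_{j+1}]$, this interval has length $|J|\leq \frac{2}{\sqrt{|E|+K}}$ and $J\cap M\supseteq\{y_j,y_{j+1}\}$ has at least two elements; moreover $V$ is bounded by $K$ on $J\subseteq[0,T]$, so all the hypotheses of Theorem \ref{lyap} are met for this particular $J$. That theorem then yields $\mathbb{P}(B)=0$, contradicting $\mathbb{P}(B)>0$. Equivalently, one may finish by hand: by the Lyapunov bound $J$ is disconjugate, so the solution $u$ has at most one zero in $J$, contradicting $u(y_j)=u(y_{j+1})=0$. I do not expect a genuine obstacle here; the only point requiring care is matching the counting constant to the length threshold of Theorem \ref{lyap}, which is exactly why the bound appears in the form $\frac{T\sqrt{|E|+K}}{2}+1$, together with checking that the global bound $|V|\leq K$ descends to the subinterval $J$ so that Theorem \ref{lyap} is genuinely applicable.
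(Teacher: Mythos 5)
Your proof is correct, but it takes a genuinely different route from the paper's. The paper's own proof invokes a zero-counting estimate, Corollary 5.2 in \cite{HP}: any nontrivial solution of $(\tau-E)f=0$ has fewer than $\frac{T\sqrt{|E|+K}}{2}+1$ zeros in $[0,T]$, and this contradicts outright the existence (via Theorem \ref{isornot} and Lemma \ref{lem2}) of a solution vanishing at the at least $\frac{T\sqrt{|E|+K}}{2}+1$ points of $M\cap[0,T]$. You instead run a pigeonhole argument to extract two consecutive interaction points $y_j<y_{j+1}$ with $y_{j+1}-y_j\leq \frac{2}{\sqrt{|E|+K}}$ and then fall back on the Lyapunov disconjugacy result, Theorem \ref{lyap}. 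The two routes are close cousins --- Hartman's zero-counting corollary is itself deduced from Lyapunov's inequality by a subdivision argument --- but yours has the advantage of being self-contained relative to the paper: it uses only results already established (Theorems \ref{isornot}, \ref{lyap} and Lemma \ref{lem2}) and needs no additional external citation, while the paper's version outsources the combinatorial work to \cite{HP}. One caveat you partially anticipate but phrase backwards: Theorem \ref{lyap} as stated assumes $|V(x)|\leq K$ for \emph{all} $x\in(a,b)$, whereas the present theorem only bounds $V$ on $[0,T]$, so the black-box invocation of Theorem \ref{lyap} is not a literal fit (the issue is not whether the bound ``descends'' to $J$, which is trivial, but that Theorem \ref{lyap}'s hypothesis is global). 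Your ``finish by hand'' --- applying Lyapunov's criterion directly on $J=[y_j,y_{j+1}]\subseteq[0,T]$, where the bound does hold, to conclude that the solution $u$ of Lemma \ref{lem2} has at most one zero in $J$, contradicting $u(y_j)=u(y_{j+1})=0$ --- is the right way to close this, since disconjugacy of $J$ depends only on the potential restricted to $J$; alternatively one can note that the proof of Theorem \ref{lyap} never uses the bound outside $J$. Both your argument and the paper's share the same harmless degenerate blind spot when $T\sqrt{|E|+K}=0$, where the hypothesis can be met with a single interaction point and neither the pigeonhole nor the zero count yields a contradiction.
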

\begin{proof}
Using  Corollary 5.2 in \cite{HP} we see that the number of zeros of any solution of $(\tau-E)f=0$ is less than
$$\frac{T\sqrt{|E|+K}}{2}+1$$
and then the proof follows as in Theorem \ref{lyap}.
\end{proof}

\subsection{Measurable Operators}\label{sub2}
 Now we introduce condition of measurability for the family of operators $H_\omega$.
\begin{definition}[See Lemma 1.2.2 in \cite{PS}, Proposition 3 in \cite{KM}]\label{meafam}
A family $\{S_\omega\}_{\omega\in\Omega}$ of  selfadjoint operators in a Hilbert space $\mathfrak{H}$ is called measurable  if the mappings
$$\omega\rightarrow<\varphi,E_\omega(\lambda)\psi>$$
are measurable for all $\varphi,\,\psi\in \mathfrak{H}$, where $E_\omega(\lambda)$ is the corresponding resolution of identity of $S_\omega$. 
\end{definition}

\begin{theorem}\label{meas}[Communicated to us by Peter Stollmann]
Let $$A(E):=\{\omega\in\Omega:E\in\sigma_p(S_\omega)\}$$ 
 as in Definition \ref{ae}. If $\{S_\omega\}_{\omega\in\Omega}$ is a measurable family of operators defined in a separable Hilbert space $\mathfrak H$, then $A(E)$ is measurable.
\end{theorem}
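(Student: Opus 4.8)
The plan is to characterize $A(E)$ through the spectral atom of $S_\omega$ at $E$, and then to exploit the separability of $\mathfrak H$ to write the condition ``$E$ is an eigenvalue of $S_\omega$'' as a countable union of measurable sets. First I would invoke the standard spectral-theoretic fact that for a selfadjoint operator $S_\omega$ with resolution of identity $E_\omega(\cdot)$, the point $E$ is an eigenvalue exactly when the spectral projection onto $\{E\}$ does not vanish, this projection being
$$P_\omega:=E_\omega(E)-E_\omega(E-),\qquad E_\omega(E-):=\lim_{\mu\uparrow E}E_\omega(\mu),$$
whose range is precisely the eigenspace $\ker(S_\omega-E)$. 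Consequently $A(E)=\{\omega\in\Omega:P_\omega\neq 0\}$.

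Next I would fix an orthonormal basis $\{e_k\}_{k\in\mathbb N}$ of $\mathfrak H$, which exists because $\mathfrak H$ is separable. Since $P_\omega$ is an orthogonal projection we have $\langle e_k,P_\omega e_k\rangle=\|P_\omega e_k\|^2$, so $P_\omega=0$ if and only if $P_\omega e_k=0$ for every $k$, equivalently $\langle e_k,P_\omega e_k\rangle=0$ for every $k$. This yields the key reduction
$$A(E)=\bigcup_{k\in\mathbb N}\bigl\{\omega\in\Omega:\langle e_k,P_\omega e_k\rangle>0\bigr\},$$
so that it suffices to prove that each function $\omega\mapsto\langle e_k,P_\omega e_k\rangle$ is measurable.

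I would then apply the measurability hypothesis. By Definition \ref{meafam}, for each fixed $\lambda$ the map $\omega\mapsto\langle e_k,E_\omega(\lambda)e_k\rangle$ is measurable; in particular this holds at $\lambda=E$. For the left limit I would write
$$\langle e_k,E_\omega(E-)e_k\rangle=\lim_{m\to\infty}\Bigl\langle e_k,E_\omega\bigl(E-\tfrac1m\bigr)e_k\Bigr\rangle,$$
using monotonicity of the spectral family; each term on the right is measurable at the fixed point $E-\tfrac1m$, and a pointwise limit of measurable functions is measurable. Hence $\langle e_k,P_\omega e_k\rangle=\langle e_k,E_\omega(E)e_k\rangle-\langle e_k,E_\omega(E-)e_k\rangle$ is measurable, each set in the union above is measurable, and $A(E)$ is measurable as a countable union.

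The step I expect to be the main obstacle is the treatment of the left limit $E_\omega(E-)$: the hypothesis only guarantees measurability of $\omega\mapsto\langle\varphi,E_\omega(\lambda)\psi\rangle$ at each individual $\lambda$, and the atom $P_\omega$ involves the one-sided limit, which is not directly covered. Approximating $E_\omega(E-)$ by the countable sequence $E_\omega(E-\tfrac1m)$ and invoking the closure of measurable functions under pointwise limits is precisely what bridges this gap; the reduction to countably many conditions via the separable basis is the other essential ingredient, and both rely only on elementary measure theory once the spectral characterization of $A(E)$ is in place.
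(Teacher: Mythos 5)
Your proof is correct and takes essentially the same approach as the paper's: both characterize $A(E)$ as the set of $\omega$ where the spectral atom of $S_\omega$ at $E$ is nonzero, reduce this to a countable union of sets indexed by a countable family of vectors (the paper uses a countable dense set, you use an orthonormal basis), and conclude from measurability of the diagonal matrix elements of the atom. Your explicit approximation $\langle e_k,E_\omega(E-)e_k\rangle=\lim_{m\to\infty}\langle e_k,E_\omega(E-\tfrac1m)e_k\rangle$ in fact fills in a step the paper asserts without detail, namely why measurability of $\omega\mapsto\langle\psi,E_\omega(\lambda)\psi\rangle$ at fixed $\lambda$ yields measurability of $\omega\mapsto\langle\psi,E_\omega(\{E\})\psi\rangle$.
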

\textit{Proof.} Let $\{\psi_n\}_{n\in\mathbb N}$ be a countable dense subset of $\mathfrak H$.

Observe that 
\begin{equation}\label{an}
A (E)=   \bigcup_{n\in\mathbb N}A_n
\end{equation}
where $$A_n:= \{\omega\in \Omega : E_\omega(\{E\})\psi_n\not=0\}$$

The set on the right hand side of (\ref{an}) is contained in $A(E)$  since
$A(E)=\{\omega\in\Omega | E_\omega(\{E\}) \not=0\}$.
 To prove the other inclusion, let $\omega\in A(E)$ and assume that for all $n$, $E_\omega(\{E\})\psi_n=0.$ For any $x\in\mathfrak{H}$ we have
	$$<E_\omega(\{E\})x,\psi_n>=<x,E_\omega(\{E\})\psi_n>=0.$$
     Since $\{\psi_n\}$ is dense, $E_\omega(\{E\})x=0$ and $E_\omega(\{E\})=0$, which is a contradiction to $\omega\in A(E)$. Therefore there is $n_0$ such that $E_\omega(\{E\})\psi_{n_0}\not=0$ and $\omega\in\bigcup\limits_{n\in\mathbb N}A_n$.
     
     We shall now prove that the sets $A_n$ are measurable.
     
Since $S_\omega$ is measurable, the function $f_n$ defined as  $\omega\rightarrow f_n(\omega):=<\psi_n, E_{\omega}(\{E\})\psi_n>$ is measurable for each $n$. We get that $\omega\in A_n^c$ if and only if
$$f_n(\omega)=<\psi_n, E_{\omega}(\{E\})\psi_n>=\|E_\omega(\{E\})\psi_n\|^2=0.$$
Thus 

$$A_n^c=\{\omega | E_\omega(\{E\})\psi_n=0\}=f_n^{-1}(\{0\}).$$
It follows that  $A_n^c$ and therefore  $A_n$ are measurable sets. Hence $A(E)$ is a countable union of measurable sets, thus measurable.
\\\hfill\qed

Using Theorem \ref{meas} we obtain the following Corollary to Theorem \ref{isornot}.
Let $\{H_\omega\}_{\omega\in\Omega}$ the family of operators introduced in  ( \ref{halfa2} ).
\begin{corolary}
  Assume that the family $\{H_\omega\}_{\omega\in\Omega}$ is measurable. For fixed $E\in \mathbb{R}$, one of the following options hold:
\begin{itemize}
	\item[$i)$] $\mathbb{P}(A(E))=0$ 
	\item[$ii)$] $A(E)=\Omega$
\end{itemize}
\end{corolary}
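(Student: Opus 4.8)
The plan is to combine Theorem \ref{meas} with Theorem \ref{isornot}, using the former to upgrade the conclusion of the latter from ``every measurable subset'' to ``the whole set $A(E)$''. The point is that Theorem \ref{isornot} only speaks about measurable subsets $B\subseteq A(E)$ and makes no assertion when $A(E)$ itself fails to be measurable; the measurability hypothesis in this Corollary is exactly what is needed to certify that $A(E)$ is itself an admissible choice of $B$.

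First I would observe that the underlying Hilbert space here is $\mathfrak{H}=L^2(a,b)$, which is separable. Hence the family $\{H_\omega\}_{\omega\in\Omega}$ from (\ref{halfa2}) is a measurable family of selfadjoint operators on a separable Hilbert space, and Theorem \ref{meas} (applied with $S_\omega=H_\omega$) yields that
$$A(E)=\{\omega\in\Omega:E\in\sigma_p(H_\omega)\}$$
is a measurable subset of $\Omega$.

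Next I would take $B:=A(E)$ in Theorem \ref{isornot}. Since $A(E)$ is now known to be measurable and is trivially a subset of itself, it is a legitimate measurable subset of $A(E)$ in the sense required by that theorem. Applying Theorem \ref{isornot} with this choice of $B$ gives the dichotomy: either $\mathbb{P}(A(E))=0$, which is option $i)$, or $A(E)=\Omega$, which is option $ii)$.

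I do not expect a genuine obstacle here: the substantive work has already been carried out in Theorems \ref{meas} and \ref{isornot}, and the present argument is purely the bootstrapping step that feeds the measurability of $A(E)$ back into the general dichotomy. The only point to verify with care is that the separability of $L^2(a,b)$ holds so that Theorem \ref{meas} is applicable, which is standard.

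\qed
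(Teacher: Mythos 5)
Your proposal is correct and follows exactly the paper's own route: the paper likewise invokes Theorem \ref{meas} (noting it is stated as a Corollary to Theorem \ref{isornot} obtained ``using Theorem \ref{meas}'') to certify that $A(E)$ is measurable, and then simply takes $B=A(E)$ in Theorem \ref{isornot}. Your additional remarks on the separability of $L^2(a,b)$ and on why measurability is the essential hypothesis are accurate elaborations of the same one-line argument.
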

\textit{Proof.} Take $B=A(E)$ in Theorem \ref{isornot}.
\hfill\qed\\

As example of a measurable family, let us mention the operators generated by the formal differential expression 
$$\tau_{\omega}:=-\frac{d^2}{dx^2}+\sum_{n\in I}\omega(n)\delta(x-x_n)$$
where $\omega(n)$ is a stationary metrically transitive random field satisfying $|\omega(n)|\leq C<\infty$, see \cite{KM}. In particular we can take $\omega(n)$ to be independent identically distributed random variables. \\

Since the operator generated by $-\frac{d^2}{dx^2}$ without point interactions does not have eigenvalues, we can apply Corollary \ref{alterna} and obtain $\mathbb{P}(A(E))=0$. We get in this case a proof of a result due to Pastur which says that the probability of any fixed $\lambda\in\mathbb{R}$ being an eigenvalue of finite multiplicity of a metrically transitive operator is zero, see Theorem 3 in \cite{LP} and Theorem 2.12 in \cite{PF}.
		
\section{Sturm-Liouville Operators with $\delta'$-Point Interactions}\label{delta2}		
Now we consider operators with $\delta'$-interactions and show how analogous results can be obtained.
Let  $-\infty\leq a<b\leq\infty$, $V\in L_{loc}^1(a,b)$ be a real valued function. Fix a discrete set $M:=\{x_n\}_{n\in I}\subset (a,b)$ where $I\subseteq\mathbb{Z}$ and let $\omega=\{\omega(n)\}_{n\in I}\in\Omega$, where $\Omega$ is defined as in Section 3.
Consider the formal differential expression
$$\tau_{\omega}:=-\frac{d^2}{dx^2}+V(x)+\sum_{n\in I}\omega(n)\delta'(x-x_n)$$
The maximal operator $T_{\omega}$ corresponding to $\tau_{\omega}$ is defined by
$$T_{\omega} f=\tau f=-\frac{d^2f}{dx^2}+Vf$$
$$D(T_{\omega})=\{f\in L^2(a,b):\,f,\,f'\mbox{ abs. cont in }(a,b)\backslash M,-f''+Vf\in L^2(a,b),$$
$$f'(x_n+)=f'(x_n-),\,f(x_n+)-f(x_n-)=\omega(n)f'(x_n),\,\forall n\in I\}$$
 The construction is similar to what we did in Section \ref{mdelta}, but notice the change of the conditions at the points $x_n$.\\
 \begin{definition}\label{sol2}
 A function $f$ is a solution of $(\tau_\omega-\lambda)f=0$ if $f$ and $f'$ are absolutely continuous in $(a,b)\backslash M$ with $-f''+Vf-\lambda f=0$ and $f'(x_n+)=f'(x_n-)$, $f(x_n+)-f(x_n-)=\omega(n) f'(x_n),\, \forall n\in I$.
 \end{definition}
 
Assume the limit point occurs at $a$ or that $\tau_{\omega}$ is regular at $a$ (See Definition \ref{regu}) and the same possibilities for $b$.\\

For $\theta,\gamma\in [0,\pi)$ fixed, let $H_\omega^{\theta,\gamma}$ be the selfadjoint restriction of $T_\omega$ defined as 
\begin{equation}\label{halfa3}
H_{\omega}^{\theta,\gamma} f=\tau f
\end{equation}
\[
\begin{array}{ccc}
D(H_{\omega}^{\theta,\gamma})&=&\left\{f\in D(T_{\omega}):
\begin{array}{cc}
f(a)cos\theta+f'(a)sen\theta=0 &\mbox{in case $\tau_\omega$ regular at $a$}\\ 
{f(b)cos\gamma+f'(b)sen\gamma=0}&\mbox{in case $\tau_\omega$ regular at $a$}
\end{array}
\right\}
\end{array}\]
Notice that the index $\theta$ or $\gamma$ are meaningless if $\tau_\omega$ is lpc at $a$ or $b$. \\
In what follows instead of $H_\omega^{\theta,\gamma}$ we shall write $H_\omega$.
\\
Similarly to what has been done before one can prove for this $H_\omega$ with $\delta'$ interactions the following Theorem.
\begin{theorem}\label{isornot2}
Let $E\in \mathbb{R}$ fixed and $B$ any measurable subset of 
$$A(E):=\{\omega\in\Omega:E\in\sigma_p(H_\omega )\}.$$
 Then one of the following options hold:
\begin{itemize}
	\item[$i)$] $\mathbb{P}(B)=0$ 
	\item[$ii)$] $A(E)=\Omega$
\end{itemize}
\end{theorem}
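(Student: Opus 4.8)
The plan is to reproduce the entire development of Sections \ref{1delta}--\ref{mdelta} verbatim, with the single structural change dictated by Definition \ref{sol2}: at each interaction point the roles of $f$ and $f'$ are interchanged, so that now $f'$ is continuous while $f$ carries the jump $f(x_n+)-f(x_n-)=\omega(n)f'(x_n)$. First I would re-examine which of the earlier lemmas survive this swap. The continuity of the Wronskian at $x_n$ (the analogue of Lemma \ref{wcont}) still holds: writing out $W_{p-}-W_{p+}$ and substituting $u_i'(p+)=u_i'(p-)$ together with $u_i(p+)=u_i(p-)+\omega u_i'(p)$, the two cross terms $\omega u_1'(p)u_2'(p)$ cancel, leaving zero. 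Since the Lagrange-type identity of Lemma \ref{wdop} and the Atkinson formula of Theorem \ref{atkin} rely only on the differential equation off the interaction point and on this continuity, they carry over unchanged.

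The genuinely new computation is the Green function comparison. Because for $\delta'$-interactions it is $f'$, not $f$, that is continuous at $p$, the correct object to track is the \emph{derivative} Green function
\[
\tilde G_\omega(z,p,p):=\frac{u'_{a,\omega}(z,p)\,u'_{b,\omega}(z,p)}{W_x(u_{a,\omega}(z),u_{b,\omega}(z))},
\]
which is well defined precisely because $u'$ does not jump. Repeating the computation of Theorem \ref{compa} — using $u_{a,\omega}=u_{a,0}$ for $x\le p$, $u_{b,\omega}=u_{b,0}$ for $x\ge p$, $u'_{a,\omega}(p+)=u'_{a,0}(p)$ and the jump $u_{a,\omega}(p+)=u_{a,0}(p)+\omega u'_{a,0}(p)$ — the denominator picks up exactly the term $\omega\,u'_{a,0}(p)u'_{b,0}(p)$, and one obtains
\[
\tilde G_\omega=\frac{\tilde G_0}{1+\omega\tilde G_0},
\]
which has the same structure as Theorem \ref{compa} (with the sign of the coupling reversed) and yields the analogue of Corollary \ref{corocompa}. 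With $\tilde G$ in place of $G$, the pole/zero dichotomy of Theorem \ref{polo} and the single-interaction statement of Theorem \ref{lemteoad} go through word for word; the only change is that the distinguished condition $u(x_{n_0})\ne0$ of Remark \ref{observa2} becomes $u'(x_{n_0})\ne0$. Indeed, if $u'(x_n)=0$ then $u(x_n+)-u(x_n-)=\omega(n)u'(x_n)=0$ for every $\omega(n)$, so the eigenfunction is admissible for all coupling constants, which is exactly the mechanism producing case $A(E)=\Omega$.

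Once the single-interaction results are in hand, the passage to countably many interactions (Lemma \ref{lema1del} and Theorem \ref{teoad}) is unaffected, since those arguments use only the Weyl alternative, the Lagrange bracket and linear (in)dependence of solutions near the endpoints, none of which distinguishes $\delta$ from $\delta'$. Finally I would redefine
\[
Q_{n,E}:=\{\omega\in B\mid \exists\,u_\omega\in D(H_\omega),\ H_\omega u_\omega=Eu_\omega,\ u'_\omega(x_n)\ne0\}
\]
and run the Fubini argument of Lemma \ref{qmes} verbatim, its only inputs being that each $p_n$ is continuous and that, for a fixed configuration away from coordinate $n$, at most one value of $\omega(n)$ can produce the eigenvalue. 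The main theorem then follows exactly as Theorem \ref{isornot}: if $A(E)\ne\Omega$ then $B\subset\bigcup_{n}Q_{n,E}$, and each $Q_{n,E}$ is null.

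I expect the one place demanding real care to be the derivative Green function computation and the corresponding pole/zero analysis of Theorem \ref{polo}: one must verify that $u_{a,\omega}$ and $u'_{a,\omega}$ still cannot vanish simultaneously at the regular endpoint and that the order-of-vanishing bookkeeping (numerator of order two, denominator of order one) is unchanged when the numerator is built from derivatives rather than from values. Everything else is a mechanical transcription of the $\delta$-case.
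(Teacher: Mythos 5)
Your proposal follows essentially the same route as the paper's own proof: the derivative Green function $\tilde G_\alpha(z,p,p)=u'_{a,\alpha}(z,p)u'_{b,\alpha}(z,p)/W(u_{a,\alpha},u_{b,\alpha})$ with the comparison formula $\tilde G_\alpha=\tilde G_0/(1+\alpha\tilde G_0)$, the transfer of Theorems \ref{polo} and \ref{lemteoad} to the $\delta'$ setting, the unchanged Section \ref{ndelta} arguments, and the redefinition of $Q_{n,E}$ via $u'_\omega(x_n)\neq 0$ are exactly the modifications the paper makes. The computations you sketch (Wronskian continuity, the jump bookkeeping in the denominator) check out, so the proposal is correct.
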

\begin{proof}
The proof follows closely the arguments given in Sections \ref{1delta}, \ref{ndelta} and \ref{mdelta}. The Wronskian can be defined for solutions of $(\tau_\omega-\lambda)u=0$ with $\delta'$-interactions and the continuity at the points $x_n$  holds as in  Lemma \ref{wcont}. The main modification in Section \ref{1delta} is the 
use of  $\tilde G$ introduced below, instead of $G$ given in Definition \ref{Ge}.
For $z\in\mathbb{C}$ let
$$\tilde G_\alpha(z,x,x):=\frac{u'_{a,\alpha}(z,x)u'_{b,\alpha}(z,x)}{W_x(u_{a,\alpha}(z),u_{b,\alpha}(z))}$$
where $u_{a,\alpha}$ and $u_{b,\alpha}$ are as in  Definition \ref{solution} but satisfying the conditions of the $\delta'-$interaction $f'(p+)=f'(p-)$ and $f(p+)-f(p-)=\alpha f'(p)$.\\

If $x\leq p$, $u_{a,0}(x)=u_{a,\alpha}(x)$ and if $x\geq p$, $u_{b,0}(x)=u_{b,\alpha}(x)$. \\\\
Now, from the condition at $p$
$$u_{a,\alpha}(p+)=u_{a,\alpha}(p-)+\alpha  u'_{a,\alpha}(p)=u_{a,0}(p-)+\alpha  u'_{a,0}(p)=u_{a,0}(p+)+\alpha u'_{a,0}(p).$$

Using this in $\tilde G_\alpha$ we get
$$\tilde G_\alpha(z,p,p)=\frac{u'_{a,\alpha}(p) u'_{b,\alpha}(p)}{W( u_{a,\alpha}, u_{b,\alpha})}=\frac{ u'_{a,\alpha}(p) u'_{b,\alpha}(p)}{u_{a,\alpha}(p+) u'_{b,\alpha}(p)-u'_{a,\alpha}(p) u_{b,\alpha}(p+)}=$$
$$=\frac{u'_{a,0}(p)u'_{b,0}(p)}{u_{a,0}(p+)u'_{b,0}(p)+\alpha u'_{a,0}(p)u'_{b,0}(p)-u'_{a,0}(p)u_{b,0}(p+)}
$$
Then
$$\tilde G_\alpha(z,p,p)=\frac{u'_{a,0}(p)u'_{b,0}(p)}{W(u_{a,0},u_{b,0})\left(1+\alpha\frac{u'_{a,0}(p)u'_{b,0}(p)}{W(u_{a,0},u_{b,0})}\right)}=\tilde G_0(z,p,p)\frac{1}{1+\alpha \tilde G_0(z,p,p)}$$

and we obtain the characterization of the eigenvalues given in Theorem \ref{polo} and then Theorem \ref{lemteoad} for $\delta'-$interactions.\\
 The results in Section \ref{ndelta} hold for $\delta'-$interactions practically without modifications. In Section \ref{mdelta} we modify  Definition \ref{ae} ( \ref{qne} )   by setting $u'_\omega(x_n)\not=0$ instead of $u_\omega(x_n)\not=0$ and obtain Lemma \ref{qmes} and Theorem \ref{isornot} for $\delta'-$interactions. Hence the result follows. \\
\end{proof}

\begin{remark}
 Mixed situations where  $\delta$ and $\delta'$ interactions  are present, can be treated  with the arguments given above.
\end{remark}
	
\section*{Acknowledgements}
We are indebted to F. Gesztesy, W. Kirsch, P. Stollmann, G. Stolz and G. Teschl for very useful and stimulating discussions. \\

This work was partially supported by project PAPIIT IN 110818.

	\bibliographystyle{plain}
	\bibliography{biblio}

\begin{thebibliography}{10}

\bibitem{AGHK}
S.~Albeverio, F.~Gesztesy, R.~H\o~egh Krohn, and H.~Holden.
\newblock {\em Solvable models in quantum mechanics}.
\newblock AMS Chelsea Publishing, Providence, RI, second edition, 2005.
\newblock With an appendix by Pavel Exner.

\bibitem{ATK}
F.~V. Atkinson.
\newblock {\em Discrete and continuous boundary problems}.
\newblock Mathematics in Science and Engineering, Vol. 8. Academic Press, New
  York-London, 1964.

\bibitem{BSW}
D.~Buschmann, G.~Stolz, and J.~Weidmann.
\newblock One-dimensional {S}chr\"{o}dinger operators with local point
  interactions.
\newblock {\em J. Reine Angew. Math.}, 467:169--186, 1995.

\bibitem{CCS}
Carol~Shubin Christ and G\"{u}nter Stolz.
\newblock Spectral theory of one-dimensional {S}chr\"{o}dinger operators with
  point interactions.
\newblock {\em J. Math. Anal. Appl.}, 184(3):491--516, 1994.

\bibitem{CFKS}
H.~L. Cycon, R.~G. Froese, W.~Kirsch, and B.~Simon.
\newblock {\em Schr\"{o}dinger operators with application to quantum mechanics
  and global geometry}.
\newblock Texts and Monographs in Physics. Springer-Verlag, Berlin, study
  edition, 1987.

\bibitem{RR}
Rafael del Rio.
\newblock Random {S}turm-{L}iouville operators.
\newblock {\em Appl. Math. Lett.}, 24(2):179--183, 2011.

\bibitem{EVZ}
W.~N. Everitt and A.~Zettl.
\newblock Differential operators generated by a countable number of
  quasi-differential expressions on the real line.
\newblock {\em Proc. London Math. Soc. (3)}, 64(3):524--544, 1992.

\bibitem{HP}
Philip Hartman.
\newblock {\em Ordinary differential equations}.
\newblock Birkh\"{a}user, Boston, Mass., second edition, 1982.

\bibitem{DHU}
Don Hinton.
\newblock Sturm's 1836 oscillation results evolution of the theory.
\newblock In {\em Sturm-{L}iouville theory}, pages 1--27. Birkh\"{a}user,
  Basel, 2005.

\bibitem{KM}
W.~Kirsch and F.~Martinelli.
\newblock On the ergodic properties of the spectrum of general random
  operators.
\newblock {\em J. Reine Angew. Math.}, 334:141--156, 1982.

\bibitem{WK}
Werner Kirsch.
\newblock Random {S}chr\"{o}dinger operators. {A} course.
\newblock In {\em Schr\"{o}dinger operators ({S}\o nderborg, 1988)}, volume 345
  of {\em Lecture Notes in Phys.}, pages 264--370. Springer, Berlin, 1989.

\bibitem{LP}
L.~A. Pastur.
\newblock Spectral properties of disordered systems in the one-body
  approximation.
\newblock {\em Comm. Math. Phys.}, 75(2):179--196, 1980.

\bibitem{PF}
Leonid Pastur and Alexander Figotin.
\newblock {\em Spectra of random and almost-periodic operators}, volume 297 of
  {\em Grundlehren der Mathematischen Wissenschaften [Fundamental Principles of
  Mathematical Sciences]}.
\newblock Springer-Verlag, Berlin, 1992.

\bibitem{WR}
Walter Rudin.
\newblock {\em Real and complex analysis}.
\newblock McGraw-Hill Book Co., New York-Toronto, Ont.-London, 1966.

\bibitem{PS}
Peter Stollmann.
\newblock {\em Caught by disorder}, volume~20 of {\em Progress in Mathematical
  Physics}.
\newblock Birkh\"{a}user Boston, Inc., Boston, MA, 2001.
\newblock Bound states in random media.

\bibitem{GT}
Gerald Teschl.
\newblock {\em Mathematical methods in quantum mechanics}, volume~99 of {\em
  Graduate Studies in Mathematics}.
\newblock American Mathematical Society, Providence, RI, 2009.
\newblock With applications to Schr\"{o}dinger operators.

\bibitem{JW}
Joachim Weidmann.
\newblock {\em Linear operators in {H}ilbert spaces}, volume~68 of {\em
  Graduate Texts in Mathematics}.
\newblock Springer-Verlag, New York-Berlin, 1980.
\newblock Translated from the German by Joseph Sz\"{u}cs.

\bibitem{AZ}
Anton Zettl.
\newblock {\em Sturm-{L}iouville theory}, volume 121 of {\em Mathematical
  Surveys and Monographs}.
\newblock American Mathematical Society, Providence, RI, 2005.

\end{thebibliography}

\end{document}